\newsavebox\myboxA
\newsavebox\myboxB
\newlength\mylenA
\newcommand*\xoverline[2][0.75]{%
    \sbox{\myboxA}{$\m@th#2$}%
    \setbox\myboxB\null% Phantom box
    \ht\myboxB=\ht\myboxA%
    \dp\myboxB=\dp\myboxA%
    \wd\myboxB=#1\wd\myboxA% Scale phantom
    \sbox\myboxB{$\m@th\overline{\copy\myboxB}$}%  Overlined phantom
    \setlength\mylenA{\the\wd\myboxA}%   calc width diff
    \addtolength\mylenA{-\the\wd\myboxB}%
    \ifdim\wd\myboxB<\wd\myboxA%
       \rlap{\hskip 0.5\mylenA\usebox\myboxB}{\usebox\myboxA}%
    \else
        \hskip -0.5\mylenA\rlap{\usebox\myboxA}{\hskip 0.5\mylenA\usebox\myboxB}%
    \fi}
\newcommand{\A}{\mathcal{A}}
\newcommand{\B}{\mathcal{B}}
\newcommand{\Z}{\mathbb{Z}}
\newcommand{\odd}{{\mathrm{odd}}}
\newcommand{\even}{{\mathrm{even}}}
\newcommand{\hyp}{{\mathrm{hyp}}}
\renewcommand{\H}{\mathcal{H}}
\newcommand{\Q}{\mathcal{Q}}
\newcommand{\RV}{\mathrm{RV}}
\newcommand{\Sp}{\mathrm{Sp}}
\newcommand{\Id}{\mathrm{Id}}
\newcommand{\tr}{{\mbox{\raisebox{0.33ex}{\scalebox{0.6}{$\intercal$}}}}}
\let\Sigmaaux\Sigma
\renewcommand{\Sigma}{\mbox{\scalebox{1.05}{$\Sigmaaux$}}}
\newcommand{\RR}{\mathcal{R}}
\newcommand{\C}{\mathbb{C}}
\renewcommand{\Re}{\mathrm{Re}}
\renewcommand{\Im}{\mathrm{Im}}
\renewcommand{\S}{\mathcal{S}}
\newcommand{\reg}{\mathrm{reg}}
\newcommand{\irr}{\mathrm{irr}}
\newcommand{\nonhyp}{\mathrm{nonhyp}}
\newtheorem{thm}{Theorem}[section]
\newtheorem{conv}{Convention}
\newtheorem{lem}[thm]{Lemma}
\newtheorem{cor}[thm]{Corollary}
\theoremstyle{definition}
\newtheorem{defn}[thm]{Definition}
\theoremstyle{remark}
\newtheorem{rem}[thm]{Remark}
\begin{document}

	\title{Simplicity of the Lyapunov spectra of certain quadratic differentials}
	
	\author{Rodolfo Gutiérrez-Romo}
	\address{Institut de Mathématiques de Jussieu -- Paris Rive Gauche, UMR 7586, Bâtiment Sophie Germain, 75205 PARIS Cedex 13, France.} \email{rodolfo.gutierrez@imj-prg.fr}
	
	\begin{abstract}
		We prove that the ``plus'' Rauzy--Veech groups of all connected components of the strata of meromorphic quadratic differentials defined on Riemann surfaces of genus at least one having at most simple poles and at least three singularities (zeros or poles), not all of even order, are finite-index subgroups of their ambient symplectic groups. This shows that the ``plus'' Lyapunov spectrum of such strata is simple. Moreover, we show that the index of the ``minus'' Rauzy--Veech group is also finite for connected components of strata satisfying the same conditions and having exactly two singularities of odd order. This shows that the ``minus'' Lyapunov spectrum of such strata is simple.
	\end{abstract}
	
	\maketitle
	\markboth{R. GUTIÉRREZ-ROMO}{SIMPLICITY OF THE LYAPUNOV SPECTRA OF CERTAIN QUADRATIC DIFFERENTIALS}
	\section{Introduction}
	Several important dynamical and geometrical properties of typical interval exchange transformations and translation flows, such as the deviation of ergodic averages \cite{Z:deviations, F:deviations}, can be characterized in terms of the Lyapunov spectrum of their embodying stratum with respect to the Masur--Veech measures. The structure of such spectrum was made precise by Kontsevich and Zorich, who conjectured that it is always simple, and by Avila and Viana, who famously established this conjecture in 2007 \cite{AV:KZ_conjecture}.
	
	For the case of half-translation flows (or, equivalently, quadratic differentials), the situation is much less clear. There are two families of exponents, usually called ``plus'' and ``minus'' \cite{EKZ:sum}. Some dynamical properties, such as the deviation of ergodic averages \cite{F:deviations,EKZ:sum} are described by the ``minus'' Lyapunov exponents, which are defined in terms of the canonical orientable double cover of the quadratic differential. Other dynamical properties, such as the diffusion rate of windtree models \cite{DHL:windtree, DZ:windtree}, are controlled by the ``plus'' Lyapunov exponents. It is known that the smallest non-negative ``plus'' and ``minus'' exponents are positive and that the two largest ``minus'' exponents are distinct \cite{T:hyperbolicity}. However, the simplicity is not known for neither family in higher genus.
	
	In this article we study the Rauzy--Veech groups of some strata of meromorphic quadratic differentials with at most simple poles. We restrict to the case of connected components of strata having at least three singularities and at least one of odd order. Observe that, from the classification of the connected components of the strata of the moduli space quadratic differentials \cite{L:connected_components,CM:low_genus}, all such strata are connected except for some strata of the form $\Q(4j+2,2k-1,2k-1)$ or $\Q(2j-1,2j-1,2k-1,2k-1)$ for integers $j,k\geq 0$ and the exceptional strata $\Q(6,3,-1)$ and $\Q(3,3,3,-1)$ in genus $3$, and $\Q(3,3,3,3)$ and $\Q(6,3,3)$ in genus $4$.
	
	Our main result is the following:
	
	\smallbreak
	\begin{thm} \label{thm:main}
		The ``plus'' Rauzy--Veech group of any connected component of a stratum of meromorphic quadratic differentials defined on genus-$g$ Riemann surfaces with $g \geq 1$ having at most simple poles and at least three singularities (zeros or poles), not all of even order, and the ``minus'' Rauzy--Veech group of connected components of strata satisfying the same conditions and having exactly two singularities of odd order are finite-index subgroups of their ambient symplectic groups. More precisely:
		\begin{itemize}
			\item The ``plus'' and ``minus'' Rauzy--Veech groups of $\Q(4j+2,2k-1,2k-1)^\hyp$ contain the Rauzy--Veech group of $\H(2g - 2)^\hyp$ for every $j, k \geq 0$.
			\item The ``plus'' Rauzy--Veech group of $\Q(2j-1,2j-1,2k-1,2k-1)^\hyp$ contains the Rauzy--Veech group of $\H(g - 1, g - 1)^\hyp$ for every $j \geq 1$ and $k \geq 0$.
			\item The ``plus'' and ``minus'' Rauzy--Veech groups of $\Q(2,3,3)^\nonhyp$ contain the Rauzy--Veech group of $\H(4)^\odd$, so their indices are at most $28$.
			\item The ``plus'' Rauzy--Veech group of $\Q(3,3,-1,-1)^\nonhyp$ is equal to its entire ambient symplectic group.
			\item In any other case, provided the conditions on the singularities are satisfied, the Rauzy--Veech groups are equal to their entire ambient symplectic groups, except if $g = 2$ where they contain the Rauzy--Veech group of $\H(2)$ and their indices are thus at most $6$.
		\end{itemize}
	\end{thm}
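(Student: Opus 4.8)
The plan is to reduce, component by component, to known Rauzy--Veech groups of connected components of strata of Abelian differentials. The key group-theoretic input is that a subgroup of $\Sp(2N,\Z)$ which contains the Rauzy--Veech group of \emph{some} connected component of a stratum of Abelian differentials on a genus-$N$ surface automatically has finite index, since those Rauzy--Veech groups are themselves of finite index (and, for $N\ge 2$ outside the minimal stratum in genus $2$, equal to the whole $\Sp(2N,\Z)$) by the classification of Rauzy--Veech groups of strata of Abelian differentials. For a quadratic differential, the ``plus'' homology of its orientation double cover is identified with $H_1$ of the base surface and the ``minus'' homology is its symplectic complement in the homology of the cover; in every case in which the statement asserts something about the ``minus'' group the relevant dimensions coincide with those of the Abelian model quoted, and likewise for the ``plus'' group. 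Hence it suffices to exhibit, inside the ``plus'' (resp.\ ``minus'') Rauzy--Veech group of each component, a copy of the Rauzy--Veech group of that model. Concretely: fix a generalized permutation in the Rauzy class; Rauzy--Veech induction lifts to the orientation double cover compatibly with the deck involution, so each loop in the Rauzy diagram produces a symplectic matrix respecting the $(\pm 1)$-eigenspace splitting of the homology of the cover, and the ``plus''/``minus'' Rauzy--Veech groups are the images of the fundamental group of the Rauzy diagram under the two projections. So the task is to produce loops whose $\pm$-matrices generate the target group.

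\textbf{Induction and base cases.} I would induct on the complexity of the component, the inductive step being furnished by the reduction operations developed earlier---merging two adjacent singularities, or deleting a simple pole by lowering the order of an adjacent singularity---under which the Rauzy diagram of the smaller component embeds into that of the larger one compatibly with the $\pm$-cocycle, so the Rauzy--Veech groups can only grow; for the ``minus'' assertions one uses only the reductions that do not change the parity pattern of the singularities, hence preserve the genus of the double cover. This peels every component satisfying the hypotheses---except the two hyperelliptic families---down to a finite list of low-dimensional ones, for which I would compute the ``plus'' and ``minus'' Rauzy--Veech groups directly, by exhibiting explicit short loops in their Rauzy diagrams: generically this yields the whole ambient symplectic group; in genus $2$ it yields the image of the Rauzy--Veech group of $\H(2)$, of index at most $6$; for $\Q(2,3,3)^\nonhyp$ the image of the Rauzy--Veech group of $\H(4)^\odd$, of index at most $28$; and for $\Q(3,3,-1,-1)^\nonhyp$ the whole symplectic group.

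\textbf{The hyperelliptic families.} For $\Q(4j+2,2k-1,2k-1)^\hyp$ and $\Q(2j-1,2j-1,2k-1,2k-1)^\hyp$ a direct argument is needed, as the reductions above do not reach them within the hyperelliptic locus. Starting from the standard generalized permutation of the component and passing to the orientation double cover, I would isolate a sub-family of Rauzy moves whose combinatorics---and whose ``plus'' cocycle matrices, together with their ``minus'' cocycle matrices in the case of the first family---reproduce a generating set of the Rauzy--Veech group of $\H(2g-2)^\hyp$ (resp.\ $\H(g-1,g-1)^\hyp$); since this identification is uniform in $j$ and $k$, the asserted containments follow for all $j$ and $k$.

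\textbf{Main obstacle.} The hard part is the base of the induction together with the three exact or exceptional assertions: one must pin down \emph{exactly} which subgroup is generated in the genus-$2$ cases, for $\Q(2,3,3)^\nonhyp$, and for $\Q(3,3,-1,-1)^\nonhyp$. This requires a sufficiently clever choice of generalized permutation so that the loops to be analyzed are short, followed by a genuine verification that \emph{no} loop's $\pm$-matrix escapes the asserted subgroup---for instance that in the genus-$2$ cases every such matrix lies in the index-$6$ image of the Rauzy--Veech group of $\H(2)$. Equally delicate are arranging the reductions so that only finitely many base cases survive, and checking the compatibility of each reduction with the cocycle on the \emph{minus} part, where the dimension of the relevant symplectic space and the intersection form behave far less transparently than on the plus part; that is where most of the technical work lies.
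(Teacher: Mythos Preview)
Your overall strategy---reduce to the Rauzy--Veech groups of Abelian strata via combinatorial adjacencies that embed one Rauzy class into another compatibly with the cocycle---is exactly the paper's strategy. The paper's mechanism is the \emph{simple extension}: insert a new letter into a generalized permutation at allowed positions, and show that each arrow $\eta$ of the smaller Rauzy class lifts to an explicit short path $\mathcal{E}_*(\eta)$ in the larger one so that the Kontsevich--Zorich matrices match after projecting out the new letter. This is your ``reduction operation'' run in reverse, and the paper carries out precisely the case analysis you sketch (connected strata, hyperelliptic families via explicit one-cylinder representatives, non-hyperelliptic components via Zorich's representatives, exceptional strata via a table found by computer search).

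However, you have misread what the theorem asserts, and this makes your ``main obstacle'' a phantom. The statements are all one-directional: the quadratic Rauzy--Veech group \emph{contains} a copy of the Abelian one, hence has index \emph{at most} $6$, \emph{at most} $28$, etc. There is no claim that the index is exactly $6$ or $28$, so there is nothing to verify about loops ``escaping'' the asserted subgroup. The containment alone gives the finite-index conclusion.

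Correspondingly, your plan to obtain the \emph{full} symplectic group in the generic case by direct computation with explicit loops is not how the paper proceeds, and would be substantially harder. The paper instead exploits that for $g\ge 3$ the Rauzy--Veech groups of the connected components of the minimal Abelian stratum are \emph{maximal} subgroups of $\Sp(2g,\Z)$ (a result of Benoist--de Saxc\'e type, using the classification of finite simple groups). Since one can build simple extensions from \emph{two different} such components into the same quadratic component, the quadratic Rauzy--Veech group contains two distinct maximal subgroups and therefore equals $\Sp(2g,\Z)$. For $g=3$ one uses $\H(4)^{\mathrm{odd}}$ and $\H(4)^{\mathrm{hyp}}$, whose indices $28$ and $288$ are incompatible; for $g\ge 4$ the two non-hyperelliptic components of $\H(2g-2)$. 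This maximality trick is the missing idea in your plan.
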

	Observe that most Rauzy--Veech groups of connected components of strata satisfying our hypotheses are equal to their entire ambient symplectic groups. The possible exceptions are hyperelliptic components and some other components in genus two and three.
	
	\smallbreak
	
	To prove the Kontsevich--Zorich conjecture, Avila and Viana established a general criterion for the simplicity of the Lyapunov spectrum of symplectic cocycles \cite{AV:simplicity,AV:KZ_conjecture}. In the case of the Teichmüller flow, this general criterion amounts to showing that the underlying monoid is \emph{pinching} and \emph{twisting}, which is almost automatic in the case that the group arising from the monoid is a finite-index subgroup of the symplectic group. Therefore, we obtain the following corollary:
	
	\smallbreak
	\begin{cor} \label{cor:simplicity}
		The ``plus'' Lyapunov spectrum of any connected component of any stratum of meromorphic quadratic differentials defined on Riemann surfaces of genus at least one having at most simple poles and at least three singularities (zeros or poles), not all of even order, is simple. Moreover, the ``minus'' Lyapunov spectrum of connected components of strata satisfying the same conditions and having exactly two singularities of odd order is also simple.
	\end{cor}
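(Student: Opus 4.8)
The plan is to derive \cref{cor:simplicity} from \cref{thm:main} by way of the Avila--Viana simplicity criterion, so that the whole argument reduces to the Zariski density that \cref{thm:main} already supplies. First I would recall the set-up. For a connected component $\mathcal C$ of a stratum of meromorphic quadratic differentials, the ``plus'' Lyapunov exponents are by definition the exponents of the Kontsevich--Zorich cocycle on the first cohomology of the half-translation surface, and the ``minus'' exponents are those of the same cocycle on the anti-invariant part of the first cohomology of the orienting double cover; in both cases the relevant cohomology carries a natural symplectic structure coming from cup product. After passing to a Rauzy--Veech suspension of $\mathcal C$ and using the Rauzy--Veech induction for quadratic differentials, this cocycle becomes cohomologous to a locally constant cocycle over the attendant symbolic dynamical system whose monodromy monoid is exactly the ``plus'' (resp.\ ``minus'') Rauzy--Veech monoid $B$, acting on the ambient symplectic lattice (a copy of $\Sp(2N,\Z)$ for the appropriate $N$). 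By \cite{AV:simplicity} (see also \cite{AV:KZ_conjecture}), the Lyapunov spectrum of such a cocycle is simple as soon as $B$ is \emph{pinching} and \emph{twisting}, so it suffices to check these two properties for every monoid produced by \cref{thm:main}.

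Second, I would pass from finite index to pinching and twisting. The ambient symplectic group $G$ has connected real points, so any finite-index subgroup of the ambient lattice is Zariski dense in $G$: if $\Gamma'\leq\Gamma$ with $[\Gamma:\Gamma']<\infty$ and $\Gamma$ is Zariski dense in the irreducible group $G$, then $G=\overline{\Gamma}=\bigcup_i\gamma_i\overline{\Gamma'}$ is a finite union of closed sets, which forces $\overline{\Gamma'}=G$. Applying this with $\Gamma=\langle B\rangle$, the group generated by the Rauzy--Veech monoid is Zariski dense; and since the Zariski closure of a subsemigroup of an algebraic group is again a subgroup (for fixed $b\in B$, left translation is an injective morphism of $\overline B$ into itself, so the chain $\overline B\supseteq b\overline B\supseteq b^2\overline B\supseteq\cdots$ stabilizes, whence $b\overline B=\overline B$ and $\Id\in\overline B=(\overline B)^{-1}$), the monoid $B$ itself is Zariski dense in $G$. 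It is then a standard implication in this circle of ideas --- going back to Avila--Viana and ultimately resting on the existence of suitably generic (proximal, regular) elements in Zariski-dense subsemigroups of semisimple groups --- that such a monoid $B$ is pinching and twisting. Together with the first paragraph, this yields simplicity of the relevant spectrum.

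It remains to run through the cases of \cref{thm:main}, in each of which the pertinent ``plus'' or ``minus'' Rauzy--Veech group has finite index in its ambient symplectic group: either it is the whole group, or it contains the Rauzy--Veech group of one of the explicit Abelian strata $\H(2g-2)^\hyp$, $\H(g-1,g-1)^\hyp$, $\H(4)^\odd$, or $\H(2)$, each of which is a finite-index subgroup by the work of Avila--Viana together with the accompanying genus-two and hyperelliptic computations. Hence the second paragraph applies verbatim in every case and gives the claimed simplicity. For the ``minus'' statement one further notes that ``exactly two singularities of odd order'' is precisely the hypothesis under which \cref{thm:main} controls the ``minus'' group, which is why the ``minus'' conclusion is asserted only in that range. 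The one point requiring care --- and the main obstacle, such as it is --- is bookkeeping rather than new mathematics: one must make sure that the cocycle governing the ``plus''/``minus'' exponents is literally the locally constant cocycle attached to the ``plus''/``minus'' Rauzy--Veech monoid, so that the criterion of \cite{AV:simplicity}, originally stated for Abelian differentials, applies in the quadratic setting through Boissy--Lanneau's Rauzy--Veech induction, and that finiteness of index is correctly inherited by the quadratic components from the Abelian strata invoked in \cref{thm:main}. The Zariski-density step and the genericity of pinching and twisting are routine.
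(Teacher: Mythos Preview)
Your strategy---pass from \Cref{thm:main} to Zariski density of the monoid, then to pinching and twisting, then invoke the Avila--Viana criterion---is exactly the paper's, and your semigroup-closure argument is a valid way to transfer density from the group to the monoid. Two points, however, are left unresolved. First, \Cref{thm:main} concerns the Rauzy--Veech \emph{group}, defined via cycles in the \emph{undirected} Rauzy class $\tilde{\RR}$, whereas the criterion needs the \emph{monoid} $B$ of directed cycles; you write ``applying this with $\Gamma=\langle B\rangle$'' as though finite index were already known for $\langle B\rangle$, but a priori $\langle B\rangle$ could sit strictly inside $\RV^\pm(\pi)$. The paper closes this with a short lemma, using strong connectedness of Rauzy classes to decompose any undirected cycle as an alternating product of directed cycles, showing the two groups coincide.

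Second, and more substantively, what you dismiss as ``bookkeeping'' is in fact the main content of the paper's \Cref{sec:simplicity}. Not every cycle in the Rauzy class corresponds to an orbit of the Teichm\"uller flow, so the Avila--Viana criterion does not apply to the full Rauzy--Veech monoid as such. The paper restricts to the submonoid of cycles of the form $\gamma^\ast w \gamma^\ast$ with $\gamma^\ast$ a fixed $k$-complete path (via the Avila--Resende machinery for quadratic differentials \cite{AR:exponential}), and then argues that this submonoid remains Zariski dense because the entries of $B_{\gamma^\ast w \gamma^\ast}$ depend polynomially on those of $B_w$. Without this step one has only established pinching and twisting for an abstract combinatorial cocycle, not for the cocycle over the Teichm\"uller flow whose Lyapunov spectrum is actually at stake. (Incidentally, the finite-index input for Abelian Rauzy--Veech groups comes from \cite{AMY:hyperelliptic,G:zariski}, not from Avila--Viana, and the passage from Zariski density to pinching and twisting is attributed in the paper to Benoist.)
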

	\smallbreak
	
	In order to prove \Cref{thm:main}, we rely on the classification of Rauzy--Veech groups of Abelian differentials \cite{AMY:hyperelliptic,G:zariski}. Indeed, we generalize the notion of simple extension defined by Avila and Viana \cite[Section 5.2]{AV:KZ_conjecture} to find explicit combinatorial adjacencies between the strata mentioned in \Cref{thm:main} and some strata of Abelian differentials. Then, we use the fact that the indices of the Rauzy--Veech groups of Abelian differentials are finite.
	
	The article is organized as follows. In \Cref{sec:preliminaries} we review the basic definitions and background. In \Cref{sec:extensions} we generalize simple extensions to the case of generalized permutations. In \Cref{sec:rauzy-veech} we define the ``plus'' and ``minus'' Rauzy--Veech groups. In \Cref{sec:proof} we prove \Cref{thm:main}. Finally, in \Cref{sec:simplicity} we show that this theorem implies the simplicity of the Lyapunov spectra in the relevant cases.
	
	\section{Rauzy--Veech induction} \label{sec:preliminaries}
	
	\subsection{Generalized permutations} In this section we will recall the Rauzy--Veech induction algorithm on generalized permutations. We refer the reader to the work by Boissy and Lanneau \cite{BL:rauzy_veech} and by Avila and Resende \cite{AR:exponential} for more details on the algorithm on quadratic differentials and to the lecture notes by Yoccoz \cite{Y:pisa} and the survey by Viana \cite{V:iet} for more details about the Abelian case.
	
	Let $\A$ be a finite set of cardinality $d \geq 2$ and $\ell, m$ be positive integers satisfying $\ell + m = d$. A \emph{generalized permutation} of type $(\ell, m)$ is a two-to-one map $\pi \colon \{1, \dotsc, 2d\} \to \A$. We usually write such a map by a table
	{\small\[
		\pi =
		\begin{pmatrix}
			\pi(1) & \pi(2) & \cdots & \pi(\ell) \\
			\pi(\ell+1) & \pi(\ell+2) & \cdots & \pi(\ell+m)
		\end{pmatrix}.
	\]}
	An involution $\sigma \colon \{1, \dotsc, 2d\} \to \{1, \dotsc, 2d\}$ is defined naturally from a generalized permutation by the rules $\sigma(i) \neq i$ and $\pi(\sigma(i)) = \pi(i)$ for every $i \in \{1, \dotsc, 2d\}$. That is, $\{i, \sigma(i)\}$ are the two positions of the letter $\pi(i) = \pi(\sigma(i))$. We say that a letter is \emph{duplicate} if both of its occurrences are in the same row.
	
	We will treat ``genuine'' permutations as special cases of generalized permutations. That is, a \emph{permutation} is a generalized permutation such that $\ell = m$ and $\sigma(i) > \ell$ for every $1 \leq i \leq \ell$, that is, having no duplicate letters. We say that a generalized permutation is a strict generalized permutation if it is not a permutation. We will also assume the following \cite[Convention 2.7]{BL:rauzy_veech}:
	\smallbreak
	\begin{conv}\label{conv:2.7}
		Every strict generalized permutation contains duplicate letters in both rows.
	\end{conv}
	\smallbreak
	The importance of this convention lies in that it is necessary for the existence of a suspension of a strict generalized permutation.
	
	A \emph{decomposition} of a generalized permutation $\pi$ is a way of writing it as
	\[
		\pi = \left(\begin{array}{c|c|c}
			F_{\mathrm{tl}} & *** & F_{\mathrm{tr}} \\\hline
			F_{\mathrm{bl}} & *** & F_{\mathrm{br}}
		\end{array}\right)
	\]
	where $F_{\mathrm{tl}}, F_{\mathrm{tr}}, F_{\mathrm{bl}}, F_{\mathrm{br}}$ are (possibly empty) subsets of $\A$. This notation means that there exist $1 \leq i_1 \leq i_2 \leq \ell < i_3 \leq i_4 \leq \ell + m$ such that
	\begin{itemize}
		\item $F_{\mathrm{tl}} = \{\pi(1), \dotsc, \pi(i_1)\}$;
		\item $F_{\mathrm{tr}} = \{\pi(i_2), \dotsc, \pi(\ell)\}$;
		\item $F_{\mathrm{bl}} = \{\pi(\ell+1), \dotsc, \pi(i_3)\}$;
		\item $F_{\mathrm{br}} = \{\pi(i_4), \dotsc, \pi(\ell+m)\}$.
	\end{itemize}
	Once a decomposition is clear from context, we refer to $F_{\mathrm{tl}}, F_{\mathrm{tr}}, F_{\mathrm{bl}}, F_{\mathrm{br}}$ as the top-left, top-right, bottom-left and bottom-right corners of $\pi$, respectively.
	
	Let $\pi$ be a strict generalized permutation. We say that $\pi$ is \emph{reducible} if there exists a decomposition
	\[
		\pi = \left(\begin{array}{c|c|c}
			A \cup B & *** & D \cup B \\\hline
			A \cup C & *** & D \cup C 
		\end{array}\right)
	\]
	where $A,B,C,D$ are disjoint (possibly empty) subsets of $\A$ satisfying one of the following conditions:
	\begin{itemize}
		\item no corner is empty;
		\item there is exactly one empty corner and it is on the left;
		\item there are exactly two empty corners and they are on the same side.
	\end{itemize}
	Otherwise, we say that it is \emph{irreducible}. On the other hand, if $\pi$ is a permutation we will use the usual definition of irreducibility: $\pi(\{1, \dots, k\}) \neq \pi(\{\ell+1, \dots, \ell+k\})$ for each $k < d$.
	
	One has that a generalized permutation stems from the directional flow of a quadratic differential on a Riemann surface or, equivalently, admits a suspension datum (defined below) if and only if it is irreducible and satisfies \Cref{conv:2.7} \cite[Theorem 3.2]{BL:rauzy_veech}. From now on, we will always assume that a generalized permutation satisfies these properties unless explicitly stated otherwise.
	
	A \emph{suspension datum} for a generalized permutation $\pi$ is a collection $(\zeta_\alpha)_{\alpha \in \A}$ of complex numbers satisfying:
	\begin{itemize}
		\item $\Re(\zeta_\alpha) > 0$ for each $\alpha \in \A$;
		\item $\sum_{1 \leq j \leq i} \Im(\zeta_{\pi(j)}) > 0$ for each $1 \leq i \leq \ell - 1$;
		\item $\sum_{1 \leq j \leq i} \Im(\zeta_{\pi(\ell+j)}) < 0$ for each $1 \leq i \leq m - 1$;
		\item $\sum_{1 \leq i \leq \ell} \zeta_{\pi(i)} = \sum_{1 \leq i \leq m} \zeta_{\pi(\ell+i)}$.
	\end{itemize}
	
	Suspension data may not necessarily define a suitable polygon. That is, the broken lines defined by the suspension datum may intersect at points different from $0$ and $\sum_{1 \leq i \leq \ell} \zeta_{\pi(i)} = \sum_{1 \leq i \leq m} \zeta_{\pi(\ell+i)}$. Nevertheless, it is always possible to construct another suspension datum from $\zeta$ which defines a suitable polygon \cite[Lemma 2.12]{BL:rauzy_veech}. For a generalized permutation $\pi$, we choose any suspension datum $\zeta$ that admits a suitable polygon and define $P_\pi$ to be such polygon and $M_\pi$ to be the half-translation surface obtained by identifying the equally-labelled sides of $P_\pi$ by translations and/or central symmetries. We put $e_\pi = \sum_{1 \leq i \leq \ell} \zeta_{\pi(i)} = \sum_{1 \leq i \leq m} \zeta_{\pi(\ell+i)}$. The arbitrary choice of $P_\pi$ is not a problem, since the notions that we will define and use are homological. Moreover, since our strategy consists of exploiting genus-preserving adjacencies with Abelian strata, we will assume that the genus of $M_\pi$ is at least one.
	
	\subsection{Rauzy--Veech induction} The Rauzy--Veech induction algorithm for generalized permutations, defined by Boissy and Lanneau \cite[Section 2.2]{BL:rauzy_veech}, consists of at most two operations $R_{\mathrm{t}}$ and $R_{\mathrm{b}}$ on $\pi$, which we call \emph{top} and \emph{bottom}.
	
	If $\sigma(\ell) > \ell$, then $R_{\mathrm{t}}$ is the type-$(\ell, m)$ generalized permutation defined as:
	\[
		R_{\mathrm{t}}(\pi)(i) =
		\begin{cases}
			\pi(i) & i \leq \sigma(\ell) \\
			\pi(\ell+m) & i = \sigma(\ell) + 1 \\
			\pi(i-1) & \text{otherwise};
		\end{cases}
	\]
	if $\sigma(\ell) < \ell$ and there exists duplicate letter in the bottom row of $\pi$ which is not the last letter, then $R_{\mathrm{t}}$ is the type-$(\ell+1, m-1)$ generalized permutation defined as:
	\[
		R_{\mathrm{t}}(\pi)(i) =
		\begin{cases}
			\pi(i) & i < \sigma(\ell) \\
			\pi(\ell+m) & i = \sigma(\ell) \\
			\pi(i-1) & \text{otherwise};
		\end{cases}
	\]
	and, in any other case, $R_{\mathrm{t}}$ is not defined on $\pi$. When a top operation is defined, we call $\pi(\ell)$ the \emph{winner} and $\pi(\ell+m)$ the \emph{loser} of the operation.
	
	Similarly, if $\sigma(\ell+m) < \ell$, then $R_{\mathrm{b}}$ is the type-$(\ell, m)$ generalized permutation defined as:
	\[
		R_{\mathrm{b}}(\pi)(i) =
		\begin{cases}
			\pi(\ell) & i = \sigma(\ell+m) + 1 \\
			\pi(i-1) & \sigma(\ell+m) + 1 < i \leq \ell \\
			\pi(i) & \text{otherwise};
		\end{cases}
	\]
	if $\sigma(\ell+m) > \ell$ and there exists duplicate letter in the top row of $\pi$ which is not the last letter, then $R_{\mathrm{b}}$ is the type-$(\ell-1, m+1)$ generalized permutation defined as:
	\[
		R_{\mathrm{b}}(\pi)(i) =
		\begin{cases}
			\pi(i+1) & \ell \leq i < \sigma(\ell+m) + 1 \\
			\pi(\ell) & i = \sigma(\ell+m) - 1 \\
			\pi(i) & \text{otherwise};
		\end{cases}
	\]
	and, in any other case, $R_{\mathrm{b}}$ is not defined on $\pi$. When a bottom operation is defined, we call $\pi(\ell+m)$ the \emph{winner} and $\pi(\ell)$ the \emph{loser} of the operation.
	
	Observe that, if $\pi$ is irreducible, then at least one of these operations is defined on $\pi$. Moreover, $R_{\mathrm{t}}(\pi)$, $R_{\mathrm{b}}(\pi)$ are also irreducible if they are defined.
	
	Consider now the directed graph whose vertices are the irreducible generalized permutations and such that $\pi \to \pi'$ is an edge if $R_{\mathrm{t}}(\pi) = \pi'$ or $R_{\mathrm{b}}(\pi) = \pi'$. The connected (or, equivalently, strongly connected) components of this graph are called \emph{Rauzy classes}. They are in a finite-to-one correspondence with the connected components of the strata of the moduli space of quadratic differentials \cite[Theorem D]{BL:rauzy_veech}.
	
	We will also use some results of Avila and Resende's work \cite{AR:exponential}, which uses a slightly different formalism for generalized permutations. Indeed, let $\A$ be an alphabet with $2d$ equipped with a fixed-point-free involution $\iota \colon \A \to \A$. Let $\ast \notin \A$ be a letter. We say that a bijection $\tau \colon \A \cup \{\ast\} \to \{1, \dotsc, 2d+1\}$ is a \emph{permutation with involution} if $\iota(\A_{\mathrm{l}}) \not\subseteq \A_{\mathrm{r}}$ and $\iota(\A_{\mathrm{r}}) \not\subseteq \A_{\mathrm{l}}$, where $\A_{\mathrm{l}} = \{\alpha \in \A \ \mid\ \tau(\alpha) < \tau(\ast)\}$ and $\A_{\mathrm{r}} = \{\alpha \in \A \ \mid\ \tau(\alpha) > \tau(\ast)\}$. We write such a bijection as a table
	\[
		\tau = \begin{pmatrix}
			\tau^{-1}(1) & \tau^{-1}(2) & \dotsc & \tau^{-1}(2d + 1)
		\end{pmatrix}.
	\]
	For any generalized permutation $\pi$ on an alphabet $\A$, we can define a permutation with involution $\tau$ on the alphabet $\A \times \{0, 1\}$ in the following way: consider the involution $\iota \colon \A \times \{0, 1\} \to \A \times \{0, 1\}$ defined as $\iota(\alpha, \varepsilon) = (\alpha, 1 - \varepsilon)$ for each $\alpha \in \A$ and $\varepsilon \in \{0, 1\}$ and let
	\[
		\tau = \begin{pmatrix}
			(\pi(\ell + m), \varepsilon_{\ell + m}) & \cdots & (\pi(\ell + 1), \varepsilon_{\ell + 1}) & \ast & (\pi(1), \varepsilon_1) & \dotsc & (\pi(\ell), \varepsilon_\ell)
		\end{pmatrix}.
	\]
	where the $\varepsilon_j \in \{0, 1\}$ are chosen so $\varepsilon_j = 1 - \varepsilon_{\sigma(j)}$ for every $j \in \{1, \dotsc, 2d\}$. Then, \Cref{conv:2.7} is equivalent to $\iota(\A_{\mathrm{l}}) \not\subseteq \A_{\mathrm{r}}$ and $\iota(\A_{\mathrm{r}}) \not\subseteq \A_{\mathrm{l}}$, so we obtain a bijection between generalized permutations satisfying \Cref{conv:2.7} and permutations with involution (up to exchanging letters in the same orbit of $\iota$). Moreover, a straightforward computation shows that the right and left operations on permutations with involution correspond, respectively, to top and bottom operations on generalized permutations.
	
	\section{Simple extensions} \label{sec:extensions}
	
	\begin{defn} \label{def:extension}
		Let $\tau$ be an irreducible generalized permutation on an alphabet $\B$ not containing $\alpha$. We say that a generalized permutation $\pi$ on the alphabet $\B \cup \{\alpha\}$ is a \emph{simple extension} of $\tau$ if $\tau$ is obtained from $\pi$ by erasing the letter $\alpha$ and the following conditions hold:
		\begin{itemize}
			\item $\alpha$ is not at the end of any row of $\pi$;
			\item at least one occurrence of $\alpha$ is not at the beginning of a row of $\pi$.
		\end{itemize}
	\end{defn}
	Observe that any simple extension of a strict generalized permutation also satisfies \Cref{conv:2.7}. On the other hand, a simple extension of a permutation satisfies this convention only when it is a permutation as well.
	
	We have that irreducibility is preserved by simple extensions of strict generalized permutations:
	
	\begin{lem}
		Let $\pi$ be a simple extension of a strict generalized permutation $\tau$ obtained by inserting a letter $\alpha$. If $\tau$ is irreducible, then $\pi$ is irreducible as well.
	\end{lem}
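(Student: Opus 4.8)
The plan is to prove the contrapositive: assuming $\pi$ is reducible, I will exhibit a decomposition of $\tau$ witnessing \emph{its} reducibility (note that $\pi$ is again strict, so the decomposition-based notion of reducibility applies to it). So suppose
\[
	\pi = \left(\begin{array}{c|c|c}
		A \cup B & *** & D \cup B \\\hline
		A \cup C & *** & D \cup C
	\end{array}\right)
\]
is a reducibility decomposition, with $A,B,C,D$ pairwise disjoint and the emptiness pattern of its four corners being one of the three admissible ones. Since $\tau$ is $\pi$ with the two occurrences of $\alpha$ deleted, the natural candidate for a decomposition of $\tau$ is to keep the \emph{same cuts}, shifted past the deleted occurrences of $\alpha$, and replace each corner $X$ by $X\setminus\{\alpha\}$; write $A_0=A\setminus\{\alpha\}$, and likewise $B_0,C_0,D_0$.

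First I would check that this is genuinely a decomposition of $\tau$: deleting a leading or interior letter from a row preserves the prefix/suffix structure and the ordering $i_1\le i_2\le\ell<i_3\le i_4$ of the four cut positions, and using $(X\cup Y)\setminus\{\alpha\}=(X\setminus\{\alpha\})\cup(Y\setminus\{\alpha\})$ the four corners come out to be exactly $A_0\cup B_0$, $D_0\cup B_0$, $A_0\cup C_0$, $D_0\cup C_0$, while pairwise disjointness of $A_0,B_0,C_0,D_0$ is inherited from that of $A,B,C,D$. All of this is routine.

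The real point — and where the two defining properties of a simple extension are used — is to check that the emptiness pattern of the new decomposition is still admissible. Deleting $\alpha$ can only turn nonempty corners into empty ones, and here the hypotheses bite. Because $\alpha$ is not at the end of any row, no \emph{right} corner can become newly empty: if, say, the top-right corner were reduced to $\{\alpha\}$, then $\pi(\ell)=\alpha$, and the bottom-right corner reduced to $\{\alpha\}$ would force $\pi(\ell+m)=\alpha$. Because at least one occurrence of $\alpha$ is not at the beginning of a row, at most one \emph{left} corner can become newly empty: top-left equal to $\{\alpha\}$ forces $\pi(1)=\alpha$ and bottom-left equal to $\{\alpha\}$ forces $\pi(\ell+1)=\alpha$, so if both vanished then both copies of $\alpha$ would sit at the start of a row. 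Hence deleting $\alpha$ creates at most one new empty corner, and it is on the left. A short case split on the admissible pattern of $\pi$ then finishes the argument: from ``no empty corner'' one gets at most one empty corner on the left; from ``one empty corner on the left'' one gets at most two empty corners, both on the left; and in the ``two empty corners on the right'' case one first observes that then $B=C=D=\emptyset$ and the two left corners coincide, so a newly empty left corner would once more force both occurrences of $\alpha$ to the beginnings of the two rows — impossible. In every case the decomposition of $\tau$ produced is a valid reducibility witness, contradicting the irreducibility of $\tau$.

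I expect the emptiness bookkeeping in this last step to be the only delicate part: one must rule out not merely extra empty corners in general, but specifically the \emph{non}-admissible configurations (a single empty corner on the right, two empty corners on opposite sides, or three or four empty corners), and it is a good consistency check that ruling these out uses \emph{both} hypotheses on the inserted letter $\alpha$.
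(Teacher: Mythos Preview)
Your argument is correct and follows essentially the same route as the paper: both take the contrapositive, transport a reducibility decomposition of $\pi$ to $\tau$ by deleting $\alpha$ from each of $A,B,C,D$, and then compare emptiness patterns, using that $\alpha$ is never a last letter (so right corners cannot collapse) and that at most one occurrence of $\alpha$ is a first letter (so at most one left corner can collapse). The paper organizes the endgame slightly differently --- it uses irreducibility of $\tau$ to force strictly more empty corners and then argues that the singleton $\{\alpha\}$ among $A,B,C,D$ must sit in a forbidden position --- but the content is the same.

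One small omission: your case split on the admissible pattern of $\pi$ treats ``two empty corners on the same side'' only when that side is the right. When both empty corners of $\pi$ are on the \emph{left}, the argument is even easier (no corner can newly empty, so $\tau$ inherits the same admissible pattern), but it should be mentioned for completeness.
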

	
	\begin{proof}
		We will prove that if $\pi$ is reducible, then the letter $\alpha$ was inserted in some positions that are forbidden by definition of simple extension. Assume then that $\pi$ is reducible, so let
		\[
			\pi = \left(\begin{array}{c|c|c}
				A \cup B & *** & D \cup B \\\hline
				A \cup C & *** & D \cup C 
			\end{array}\right),
			\text{ with } A,B,C,D \text{ disjoint subsets of } \mathcal{A},
		\]
		be a decomposition as in the definition of reducibility.
		
		Let $A' = A \setminus \{\alpha\}$, $B' = B \setminus \{\alpha\}$, $C' = C \setminus \{\alpha\}$ and $D' = D \setminus \{\alpha\}$. We have that
		\[
			\tau = \left(\begin{array}{c|c|c}
				A' \cup B' & ***' & D' \cup B' \\\hline
				A' \cup C' & ***' & D' \cup C'
			\end{array}\right)
		\]
		and that this decomposition does not satisfy the definition of reducibility. That is: at least one corner is empty; if there is exactly one empty corner, it is on the right; and, if there are exactly two empty corners, they are on different sides. Observe that, in particular, $\alpha \in A \cup B \cup C \cup D$ and that $\tau$ has more empty corners than $\pi$. Therefore, the set in $\{A, B, C, D\}$ containing $\alpha$ is actually equal to $\{\alpha\}$.
		
		We consider two cases:
		\begin{enumerate}
			\parindent=0pt
			\item If $A = \{\alpha\}$, then the left corners of $\pi$ are not empty. If both right corners of $\pi$ were non-empty, then both right corners of $\tau$ would be non-empty as well, which is not possible. By definition of reducibility, it is not possible that exactly one right corner of $\pi$ is empty. Therefore, both of its right corners are empty. We conclude that $\alpha$ was inserted at the beginning of both rows.
			\item If $B = \{\alpha\}, C = \{\alpha\}$ or $D = \{\alpha\}$, then one of the right corners must be equal to $\{\alpha\}$ since, otherwise, $\tau$ would not have more empty corners than $\pi$. Therefore, $\alpha$ was inserted at the end of a row.
		\end{enumerate}
	\end{proof}
	
	If $\pi$ is a simple extension of $\tau$ and $\eta$ is an arrow in the Rauzy class of $\tau$, we define the path $\gamma = \mathcal{E}_*(\eta)$ in the Rauzy class of $\pi$ starting at $\pi$ as follows:
	\begin{enumerate}
		\item If $\eta$ is of top type and $\alpha$ is the next-to-last letter in the bottom row of $\tau$, then $\gamma$ consists of two top arrows if the occurrences of $\alpha$ are not consecutive, and of three top arrows if they are. \label{case1}
		\item If $\eta$ is of bottom type and $\alpha$ is the next-to-last letter in the top row of $\tau$, then $\gamma$ consists of two bottom arrows if the occurrences of $\alpha$ are not consecutive, and of three bottom arrows if they are. \label{case2}
		\item Otherwise, $\gamma$ consists of a single arrow of the same type of $\eta$. \label{case3}
	\end{enumerate}
	
	\begin{lem}
		Using the previous notation, $\gamma$ is well-defined and its end $\pi'$ is a simple extension of the end $\tau'$ of $\eta$.
	\end{lem}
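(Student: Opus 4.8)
The plan is to reduce everything to a single \emph{compatibility principle} — a Rauzy--Veech move commutes with erasing a letter that is neither its winner nor its loser — and then to observe that $\gamma$ is arranged precisely so that this principle applies, with the proviso that in Cases~\ref{case1} and~\ref{case2} a few extra moves whose loser \emph{is} $\alpha$ are inserted to return $\alpha$ to a legal position. Precisely, the principle reads: let $\rho$ be an irreducible generalized permutation containing a letter $\beta$ both of whose occurrences are away from the ends of their rows; suppose the top (resp.\ bottom) move is defined both on $\rho$ and on the permutation $\bar{\rho}$ obtained from $\rho$ by erasing $\beta$; and suppose these two moves have the same winner $W$ and loser $L$ (necessarily $W, L \neq \beta$). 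Then erasing $\beta$ from $R_{\mathrm{t}}(\rho)$ (resp.\ $R_{\mathrm{b}}(\rho)$) yields $R_{\mathrm{t}}(\bar{\rho})$ (resp.\ $R_{\mathrm{b}}(\bar{\rho})$). Indeed, each move detaches $L$ from the end of its row and splices it back in just after the occurrence of $W$ which is not the last entry of the top (resp.\ bottom) row; erasing $\beta$ affects neither which occurrence of $W$ this is, nor the order of the remaining letters, nor the side conditions (the position of $\sigma(\ell)$ relative to $\ell$, the existence of a non-last duplicate in the appropriate row) distinguishing the type-preserving from the type-changing variant, none of which is sensitive to inserting a letter that is never last in a row.

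Case~\ref{case3} follows quickly. As $\alpha$ is not last in any row of $\pi$, the last entries of the two rows of $\pi$ coincide with those of $\tau$, so the move of the same type as $\eta$ has the same winner and loser on $\pi$ and (again because inserting $\alpha$ away from row ends does not disturb the side conditions) is defined on $\pi$; the compatibility principle then gives that erasing $\alpha$ from $\pi'$ returns $\tau'$. To finish one must see that $\pi'$ still meets the two conditions of \Cref{def:extension}. Splicing $L$ back in just after an occurrence of $W$ pushes $\alpha$ to the end of a row only when $\alpha$ is the entry immediately preceding $L$ in the loser's row of $\pi$, which is exactly the configuration of Cases~\ref{case1} and~\ref{case2} and hence excluded here; and it can move an occurrence of $\alpha$ to the start of a row only if that occurrence is the winner or the loser, which $\alpha$ is not. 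Both conditions therefore persist, so $\pi'$ is a simple extension of $\tau'$.

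The content of the lemma is in Cases~\ref{case1} and~\ref{case2}, which are swapped by the symmetry exchanging the two rows (it interchanges top and bottom moves and preserves simple extensions), so I would treat Case~\ref{case1}: $\eta$ is a top move and $\alpha$ is the next-to-last entry of the bottom (loser's) row of $\pi$. The first arrow of $\gamma$ is the top move on $\pi$; its winner $W$ and loser $L$ lie in $\B$, and by the compatibility principle its target $\pi^{(1)}$ erases over $\alpha$ to $\tau'$ — but $\alpha$, having been next-to-last, is now the last entry of the bottom row of $\pi^{(1)}$, so $\pi^{(1)}$ is not yet a simple extension of $\tau'$. I would then check that a top move is again defined on $\pi^{(1)}$: its winner is still $W$ (a top move leaves the last entry of the top row unchanged, whether type-preserving or type-changing) and its loser is $\alpha$; if the first move was type-preserving then $W$ still has an occurrence in the bottom row of $\pi^{(1)}$, and if it was type-changing then $W$ now has two occurrences in the top row while the last bottom entry of $\pi^{(1)}$ is $\alpha$, so, $\pi^{(1)}$ being necessarily strict (the type-changing move being available only when $\pi$ is strict), \Cref{conv:2.7} supplies the required non-last bottom duplicate. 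Applying this move relocates an occurrence of $\alpha$ to just after $W$ while leaving the $\alpha$-erased permutation unchanged (a move whose loser is $\alpha$ is invisible to erasing $\alpha$, including the change of type), so $\pi^{(2)}$ again erases to $\tau'$. If the two occurrences of $\alpha$ in $\pi$ are not consecutive, $\alpha$ is now off the end of the bottom row and we stop; if they are consecutive, the remaining $\alpha$ still sits just before $L$, so $\alpha$ is once more last and a third top move is needed, after which the two copies of $\alpha$ sit consecutively just after $W$. In every sub-case the moved occurrences of $\alpha$ in $\pi'$ immediately follow the occurrence of $W$ used by the move — which is not last in its row, so these occurrences are neither last nor first — while any untouched occurrence of $\alpha$ was and remains away from the row ends; combined with $\pi'$ erasing to $\tau'$, this proves that $\pi'$ is a simple extension of $\tau'$ and that $\gamma$ is a well-defined path.

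The real obstacle is the bookkeeping in the last paragraph: it must be carried out over the product of cases ``top versus bottom move'' $\times$ ``type-preserving versus type-changing'' $\times$ ``consecutive versus non-consecutive occurrences of $\alpha$'', keeping track in each case of the row containing $W$'s relevant occurrence and of the row containing $\alpha$'s untouched occurrence, and verifying that the prescribed number of arrows is exactly what is needed to restore $\alpha$ to a legal position without altering the $\alpha$-erased permutation. Each single verification is a short explicit computation with the formulas for $R_{\mathrm{t}}$ and $R_{\mathrm{b}}$; the row-swap symmetry halves the work, and the compatibility principle isolates the one spot (moves with loser $\alpha$) where a separate but equally short argument is required.
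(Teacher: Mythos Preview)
Your approach is essentially the paper's: both are case analyses over the type of $\eta$, the location of the other occurrence of the winner, and whether the two copies of $\alpha$ are consecutive. You package the routine part as a ``compatibility principle'' (erasing $\alpha$ commutes with a Rauzy move whose winner and loser are not $\alpha$), while the paper simply writes out the tables; the content is the same.

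There is one genuine gap in your write-up. In the type-changing sub-case of Case~\ref{case1} (both occurrences of the winner $W$ lie in the top row), you argue that the second top move on $\pi^{(1)}$ is defined by invoking \Cref{conv:2.7}: ``$\pi^{(1)}$ being necessarily strict \dots\ Convention~\ref{conv:2.7} supplies the required non-last bottom duplicate.'' But \Cref{conv:2.7} only guarantees \emph{some} bottom duplicate in $\pi^{(1)}$, and that duplicate could be $\alpha$ itself (when both copies of $\alpha$ lie in the bottom row). Since $\alpha$ is now the last bottom letter of $\pi^{(1)}$, it does not meet the ``which is not the last letter'' requirement in the definition of the type-changing top move, and the same issue recurs at the third move in the consecutive sub-case. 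The paper closes this by using that the type-changing top arrow is defined on $\tau$: that hypothesis furnishes a bottom duplicate $D$ of $\tau$ with $D \neq L$, and since $D \in \B$ one also has $D \neq \alpha$; this $D$ remains a bottom duplicate throughout $\pi, \pi^{(1)}, \pi^{(2)}$ (only $L$ and $\alpha$ move) and is never the last bottom letter there (the last bottom letter is $L$ or $\alpha$ at each stage). Replacing your appeal to \Cref{conv:2.7} by this observation fixes the argument.
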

	
	\begin{proof}
		We consider the three cases separately. In case \eqref{case1}, we may have that the winning letter $\ast$ of $\eta$ occurs in both rows. If the occurrences of $\alpha$ are not consecutive, then
		{\small\[
			\eta =
			\begin{pmatrix}
				\cdot & \cdot & \cdot & \cdot & \ast \\
				\cdot & \ast & \cdot & \cdot & \beta
			\end{pmatrix} \to
			\begin{pmatrix}
				\cdot & \cdot & \cdot & \cdot & \ast \\
				\cdot & \ast & \beta & \cdot & \cdot 
			\end{pmatrix}
		\]}
		{\small\[
			\gamma =
			\begin{pmatrix}
				\cdot & \cdot & \cdot & \cdot & \cdot & \ast \\
				\cdot & \ast & \cdot & \cdot & \alpha & \beta
			\end{pmatrix} \to
			\begin{pmatrix}
				\cdot & \cdot & \cdot & \cdot & \cdot & \ast \\
				\cdot & \ast & \beta & \cdot & \cdot & \alpha
			\end{pmatrix} \to
			\begin{pmatrix}
				\cdot & \cdot & \cdot & \cdot & \cdot & \ast \\
				\cdot & \ast & \alpha & \beta & \cdot & \cdot 
			\end{pmatrix}
		\]}
		Contrarily, if they are consecutive, then
		{\small\[
			\eta =
			\begin{pmatrix}
				\cdot & \cdot & \cdot & \cdot & \ast \\
				\cdot & \ast & \cdot & \cdot & \beta
			\end{pmatrix} \to
			\begin{pmatrix}
				\cdot & \cdot & \cdot & \cdot & \ast \\
				\cdot & \ast & \beta & \cdot & \cdot 
			\end{pmatrix}
		\]}
		\begin{align*}
			\gamma &=
			\begin{pmatrix}
				\cdot & \cdot & \cdot & \cdot & \cdot & \ast \\
				\cdot & \ast & \cdot & \alpha & \alpha & \beta
			\end{pmatrix} \to
			\begin{pmatrix}
				\cdot & \cdot & \cdot & \cdot & \cdot & \ast \\
				\cdot & \ast & \beta & \cdot & \alpha & \alpha
			\end{pmatrix} \\
			&\quad \to
			\begin{pmatrix}
				\cdot & \cdot & \cdot & \cdot & \cdot & \ast \\
				\cdot & \ast & \alpha & \beta & \cdot & \alpha 
			\end{pmatrix} \to
			\begin{pmatrix}
				\cdot & \cdot & \cdot & \cdot & \cdot & \ast \\
				\cdot & \ast & \alpha & \alpha & \beta & \cdot
			\end{pmatrix}
		\end{align*}
		In both of these cases, the arrows exist since $\beta \neq \ast$ and $\alpha \neq \ast$.
		
		Otherwise, both occurrences of $\ast$ are in the top row. If the occurrences of $\alpha$ are not consecutive, then
		{\small\[
			\eta =
			\begin{pmatrix}
				\cdot & \cdot & \ast & \cdot & \ast \\
				\cdot & \cdot & \cdot & \cdot & \beta
			\end{pmatrix} \to
			\begin{pmatrix}
				\cdot & \cdot & \beta & \ast & \cdot & \ast \\
				& \cdot & \cdot & \cdot & \cdot 
			\end{pmatrix}
		\]}
		{\small\[
			\gamma =
			\begin{pmatrix}
				\cdot & \cdot & \ast & \cdot & \cdot & \ast \\
				\cdot & \cdot & \cdot & \cdot & \alpha & \beta
			\end{pmatrix} \to
			\begin{pmatrix}
				\cdot & \cdot & \beta & \ast & \cdot & \cdot & \ast \\
				& \cdot & \cdot & \cdot & \cdot & \alpha
			\end{pmatrix} \to
			\begin{pmatrix}
				\cdot & \cdot & \beta & \alpha & \ast & \cdot & \cdot & \ast \\
				& & \cdot & \cdot & \cdot & \cdot
			\end{pmatrix}
		\]}
		Contrarily, if they are consecutive, then
		{\small\[
			\eta =
			\begin{pmatrix}
				\cdot & \cdot & \ast & \cdot & \ast \\
				\cdot & \cdot & \cdot & \cdot & \beta
			\end{pmatrix} \to
			\begin{pmatrix}
				\cdot & \cdot & \beta & \ast & \cdot & \ast \\
				& \cdot & \cdot & \cdot & \cdot 
			\end{pmatrix}
		\]}
		{\small\begin{align*}
			\gamma &=
			\begin{pmatrix}
				\cdot & \cdot & \ast & \cdot & \cdot & \ast \\
				\cdot & \cdot & \cdot & \alpha & \alpha & \beta
			\end{pmatrix} \to
			\begin{pmatrix}
				\cdot & \cdot & \beta & \ast & \cdot & \cdot & \ast \\
				& \cdot & \cdot & \cdot & \alpha & \alpha
			\end{pmatrix} \\
			&\quad \to
			\begin{pmatrix}
				\cdot & \cdot & \beta & \alpha & \ast & \cdot & \cdot & \ast \\
				& & \cdot & \cdot & \cdot & \alpha
			\end{pmatrix} \to
			\begin{pmatrix}
				\cdot & \cdot & \beta & \alpha & \alpha & \ast & \cdot & \cdot & \ast \\
				& & & \cdot & \cdot & \cdot
			\end{pmatrix}
		\end{align*}}
		and, in both cases, the arrows exist since, as a top arrow starts at $\tau$, there exists a duplicate letter in the bottom row of $\tau$ which is different from $\beta$ (and $\alpha$).
		
		Observe that, in all cases, $\alpha$ cannot be at the end of a row of $\pi'$ and at least one of its occurrences cannot be at the beginning of a row. Thus, $\pi'$ is a simple extension of $\tau'$.
		
		Case \eqref{case2} is completely analogous to case \eqref{case1}, so we will now prove case \eqref{case3}. If $\eta$ is of top type and the winning letter $\ast$ occurs on both rows,
		{\small\[
			\eta =
			\begin{pmatrix}
				\cdot & \cdot & \cdot & \cdot & \ast \\
				\cdot & \ast & \cdot & \cdot & \beta
			\end{pmatrix} \to
			\begin{pmatrix}
				\cdot & \cdot & \cdot & \cdot & \ast \\
				\cdot & \ast & \beta & \cdot & \cdot 
			\end{pmatrix}
		\]}
		{\small\[
			\gamma =
			\begin{pmatrix}
				\cdot & \cdot & \alpha & \cdot & \cdot & \ast \\
				\cdot & \ast & \cdot & \alpha & \cdot & \beta
			\end{pmatrix} \to
			\begin{pmatrix}
				\cdot & \cdot & \alpha & \cdot & \cdot & \ast \\
				\cdot & \ast & \beta & \cdot & \alpha & \cdot 
			\end{pmatrix}
		\]}
		and the arrow exists because $\beta \neq \ast$.
		
		Oh the other hand, if $\ast$ occurs twice in the top row
		{\small\[
			\eta =
			\begin{pmatrix}
				\cdot & \cdot & \ast & \cdot & \ast \\
				\cdot & \cdot & \cdot & \cdot & \beta
			\end{pmatrix} \to
			\begin{pmatrix}
				\cdot & \cdot & \beta & \ast & \cdot & \ast \\
				& \cdot & \cdot & \cdot & \cdot 
			\end{pmatrix}
		\]}
		{\small\[
			\gamma =
			\begin{pmatrix}
				\cdot & \alpha & \cdot & \ast & \cdot & \ast \\
				\cdot & \cdot & \alpha & \cdot & \cdot & \beta
			\end{pmatrix} \to
			\begin{pmatrix}
				\cdot & \alpha & \cdot & \beta & \ast & \cdot & \ast \\
				& \cdot & \cdot & \alpha & \cdot & \cdot 
			\end{pmatrix}
		\]}
		and the arrow exists since, as a top arrow starts at $\tau$, there exists a duplicate letter in the bottom row of $\tau$ which is different from $\beta$.
		
		Observe that, in both cases, $\alpha$ cannot be at the end of a row of $\pi'$, since $\alpha$ is not the next-to-last letter in the bottom row. Thus, $\pi'$ is a simple extension of $\tau'$.
		
		If $\eta$ is an arrow of bottom type, the computations are analogous.
	\end{proof}
	
	\begin{rem}
		When a sequence of simple extensions is clear from context, we will also call $\mathcal{E}_*$ the \emph{composition} of the extension maps of these simple extensions.
	\end{rem}
	
	\section{Rauzy--Veech groups} \label{sec:rauzy-veech}
	
	\subsection{The ``plus'' Rauzy--Veech group}
	
	For a letter $\alpha \in \A$, we define the integers $i_\alpha, j_\alpha \in \{1, \dotsc, 2d\}$ to satisfy $\pi^{-1}(\alpha) = \{i_\alpha, j_\alpha\}$, with $i_\alpha < j_\alpha$. We define an alternate form $\Omega_\pi$ indexed by $\A \times \A$ as:
	{\small\[
		(\Omega_\pi)_{\alpha\beta} =
		\begin{cases}
			+1 & i_\alpha < i_\beta \leq \ell \text{ and } j_\alpha > j_\beta > \ell \\
			+1 & i_\alpha < i_\beta < j_\alpha < j_\beta \leq \ell \\
			+1 & i_\beta < i_\alpha < j_\beta \leq \ell < j_\alpha \\
			+1 & j_\alpha > j_\beta > i_\alpha > \ell \text{ and } i_\alpha > i_\beta \\
			-1 & i_\beta < i_\alpha \leq \ell \text{ and } j_\beta > j_\alpha > \ell \\
			-1 & i_\beta < i_\alpha < j_\beta < j_\alpha \leq \ell \\
			-1 & i_\alpha < i_\beta < j_\alpha \leq \ell < j_\beta \\
			-1 & j_\beta > j_\alpha > i_\beta > \ell \text{ and } i_\beta > i_\alpha \\
			0 & \text{otherwise.}
		\end{cases}
	\]}
	This alternate form is the intersection form of simple closed curves $\{\theta_\alpha\}_{\alpha \in \A}$ joining the midpoints of the sides $M_\pi$, oriented left to right, right to left and upwards for sides that are both in the top row, both in the bottom row, and in both rows, respectively. These curves are a natural basis of $H_1(M_\pi \setminus \Sigma_\pi)$. In particular, if the genus of $M_\pi$ is $g$, then the rank of $\Omega_{\pi}$ is $2g$. We denote $\langle \cdot, \cdot \rangle$ symplectic form on $H_1(M_\pi \setminus \Sigma_\pi)$ defined by $\Omega_\pi$.
	
	 \begin{figure}
	 	\includegraphics[width=\textwidth]{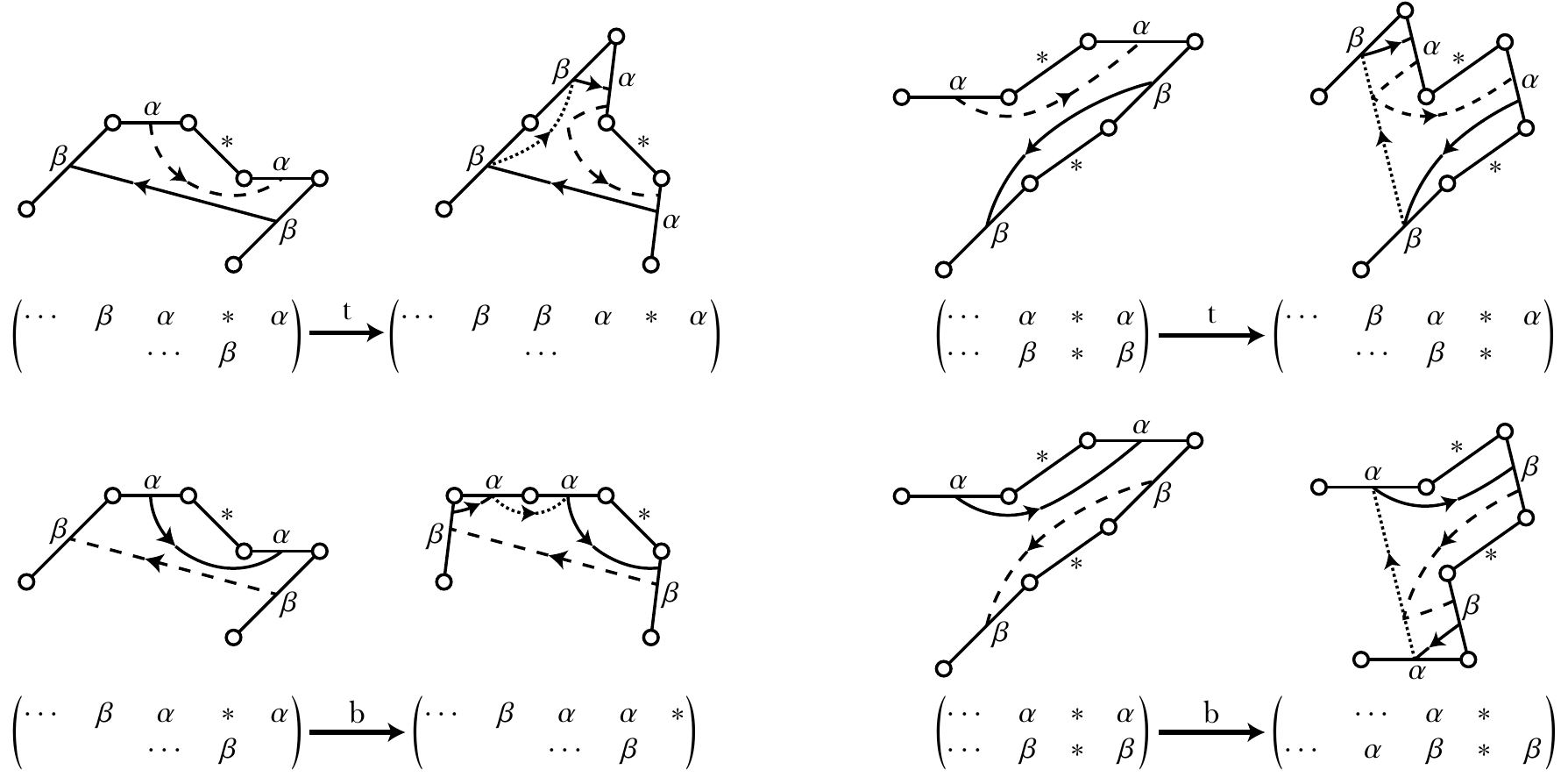}
	 	\caption{Computation of the Kontsevich--Zorich matrices as a change of basis in homology when $\langle \theta_{\alpha_{\mathrm{l}}}, \theta_{\alpha_{\mathrm{w}}}\rangle = 0$. The solid line represents $\theta_{\alpha_{\mathrm{l}}}$, the dashed line $\theta_{\alpha_{\mathrm{w}}} = \theta_{\alpha_{\mathrm{w}}}'$ and the dotted line $\theta_{\alpha_{\mathrm{l}}}'$. Observe that, in all of these cases, $\theta_{\alpha_{\mathrm{l}}} + \theta_{\alpha_{\mathrm{l}}}' + \theta_{\alpha_{\mathrm{w}}} = 0$, so the map $H_1(M_{\pi'} \setminus \Sigmaaux_{\pi'}) \to H_1(M_\pi \setminus \Sigmaaux_\pi)$ is represented in these bases as $\Id - E_{\alpha_{\mathrm{l}}\alpha_{\mathrm{w}}} - 2E_{\alpha_{\mathrm{l}}\alpha_{\mathrm{l}}}$.}
	 	\label{fig:homology}
	 \end{figure}
	
	Let $\RR$ be a Rauzy class of generalized permutations. We consider an undirected version $\tilde{\RR}$ of $\RR$: for each arrow $\gamma = \pi \to \pi'$ we add a reversed arrow $\gamma^{-1} = \pi' \to \pi$. Now, let $\gamma = \pi \to \pi'$ be an arrow in $\RR$. Let $\alpha_{\mathrm{w}}$ and $\alpha_{\mathrm{l}}$ be, respectively, the winner and loser of the operation sending $\pi$ to $\pi'$. We define the \emph{``plus'' Kontsevich--Zorich matrix} indexed by $\A \times \A$ as the change of basis matrix mapping $\{\theta_\alpha'\}_{\alpha \in \A}$ to $\{\theta_\alpha\}_{\alpha \in \A}$, where $\{\theta_\alpha'\}_{\alpha \in \A}$ is the natural basis of $H_1(M_{\pi'} \setminus \Sigma_{\pi'})$ (defined in an analogous way as $\{\theta_\alpha\}_{\alpha \in \A}$ for $M_\pi$). The resulting matrices are:
	\[
		B_{\gamma} = 
		\begin{cases}
			\Id + E_{\alpha_{\mathrm{l}}\alpha_{\mathrm{w}}} & \langle \theta_{\alpha_{\mathrm{l}}}, \theta_{\alpha_{\mathrm{w}}}\rangle \neq 0 \\
			\Id - E_{\alpha_{\mathrm{l}}\alpha_{\mathrm{w}}} - 2E_{\alpha_{\mathrm{l}}\alpha_{\mathrm{l}}} & \langle \theta_{\alpha_{\mathrm{l}}}, \theta_{\alpha_{\mathrm{w}}}\rangle = 0
		\end{cases}
	\]
	 where $\Id$ is the identity matrix and $E_{\alpha\beta}$ has only one non-zero coefficient, equal to $1$, at position $\alpha\beta$. See \Cref{fig:homology} for some of the computations. Observe that the determinant of such matrices is $\pm 1$.
	 
	 \begin{rem}
	 	The action of the Rauzy--Veech induction on suspension data, which was analysed by Boissy and Lanneau \cite{BL:rauzy_veech}, is represented by elementary matrices of the form $\Id + E_{\alpha_{\mathrm{l}}\alpha_{\mathrm{w}}}$. The ``discrepancy'' with the formula for $B_\gamma$ above is explained by the fact that suspension data is always oriented ``rightwards'', that is, $\Re(\zeta_\alpha) > 0$ for each suspension datum $(\zeta_\alpha)_{\alpha \in \A}$. On the other hand, the homological action has to take the changes in orientation into account.
	 \end{rem}
	 
	 Analogously, we define $B_{\gamma^{-1}} = B_{\gamma}^{-1}$. Now consider a walk $\gamma = \gamma_1 \gamma_2 \dotsb \gamma_n$ in $\tilde{\RR}$ starting at $\pi$ and ending at $\pi'$. We define $B_\gamma = B_{\gamma_n} B_{\gamma_{n-1}} \dotsb B_{\gamma_1}$, which satisfies $\Omega_{\pi'} = B_\gamma \Omega_\pi B_\gamma^\tr$. In particular, if $\pi' = \pi$ (that is, if $\gamma$ is a cycle), one has that $B_\gamma$ (acting on \emph{row} vectors) belongs to $\Sp(\Omega_\pi, \Z)$. The ``plus'' Rauzy--Veech group of $\pi$ is the group generated by matrices of this form:
	\begin{defn}
		Let $\RR$ be a Rauzy class and $\pi \in \RR$ be a fixed vertex. We define the \emph{``plus'' Rauzy--Veech group} $\RV^+(\pi)$ as the set of matrices of the form $B_\gamma \in \Sp(\Omega_\pi, \Z)$ where $\gamma$ is a cycle on $\tilde{\RR}$ with endpoints at $\pi$. We will always consider the action of $\RV^+(\pi)$ on row vectors.
	\end{defn}
	
	Observe if $\pi, \pi'$ are vertices of the same Rauzy class $\RR$, then $\RV^+(\pi)$ and $\RV^+(\pi')$ are isomorphic, so we can define the ``plus'' Rauzy--Veech group of a Rauzy class. Indeed, if $\gamma$ is any walk joining $\pi$ and $\pi'$, then the conjugation by $B_\gamma$ is an isomorphism between $\Sp(\Omega_\pi, \Z)$ and $\Sp(\Omega_{\pi'}, \Z)$ and between $\RV^+(\pi)$ and $\RV^+(\pi')$. This shows, in particular, that the ``plus'' Rauzy--Veech group of a Rauzy class has a well-defined index inside its ambient symplectic group.
	
	In general, ``plus'' Rauzy--Veech groups are symplectic with respect to a degenerate symplectic form $\Omega_\pi$. Therefore, we will define Rauzy--Veech groups in ``absolute homology'' in analogy with the Abelian case \cite[Section 6]{G:zariski}. That is, there exist natural maps
	\[
		H_1(M_\pi \setminus \Sigma_\pi) \to H_1(M_\pi) \to H_1(M_\pi, \Sigma_\pi).
	\]
	The former is surjective and is induced by ``forgetting the punctures'', while the latter is injective and comes from the fact that an absolute cycle is also a relative cycle. The natural basis $\{e_\alpha\}_{\alpha \in \A} = \{[\theta_\alpha]\}_{\alpha \in \A}$ of $H_1(M_\pi \setminus \Sigma_\pi)$ is dual to a basis $\{f_\alpha\}_{\alpha \in \A}$ on $H_1(M_\pi, \Sigma_\pi)$ defined by the sides of the polygon $P_\pi$, oriented so that the rightmost copy is goes from left to right. The (possibly degenerate) symplectic form $\Omega_\pi$ descends to the quotient $H_1(M_\pi \setminus \Sigma_\pi) / \ker \Omega_\pi$ into a non-degenerate symplectic form. This group is naturally isomorphic to $H^1(M_\pi)$. The image $V^+(\pi)$ of $H_1(M_\pi)$ inside $H_1(M_\pi, \Sigma_\pi)$ is naturally isomorphic to $H_1(M_\pi)$ and is spanned by $\{\Omega_\pi f_\alpha\}_{\alpha \in \A}$ (where $\Omega_\pi$ is seen as matrix in the basis $\{e_\alpha\}_{\alpha \in \A}$ and $f_\alpha$ is seen as a canonical vector).

	The symplectic isomorphism between $V^+(\pi)$ and $H_1(M_\pi \setminus \Sigma_\pi) / \ker \Omega_\pi$ arising from Poincaré-duality can be described as $\Omega_\pi f_\alpha \mapsto e_\alpha$ for each $\alpha \in \A$. Moreover, the group $\RV^+(\pi)$ induces a \emph{right} action on $H_1(M_\pi \setminus \Sigma_\pi) / \ker \Omega_\pi$ and a \emph{left} action on $V^+(\pi)$ which are dual to each other. Therefore, we define the ``plus'' Rauzy--Veech group in ``absolute homology'' by its action on $H_1(M_\pi \setminus \Sigma_\pi) / \ker \Omega_\pi$. We denote this group by $\RV^+(\pi)|_H \leq \Sp(\Omega_\pi, \Z)|_H$, where the latter is the group of symplectic automorphisms on $H_1(M_\pi \setminus \Sigma_\pi) / \ker \Omega_\pi$. The previous discussion shows that the action on $V^+(\pi)$ produces an isomorphic group.
	
	Moreover, as it is also the case for strata of Abelian differentials, simple extensions allow us to find copies of simpler ``plus'' Rauzy--Veech groups inside more complex ones:
	\begin{lem} \label{lem:extension_matrices}
		Let $\pi$ be a genus-preserving simple extension of $\tau$ obtained by inserting a letter $\alpha$. Then, the canonical injection $\iota \colon H_1(M_{\tau} \setminus \Sigma_{\tau}) \to H_1(M_\pi \setminus \Sigma_{\pi})$ descends to the quotients by $\ker \Omega_\tau$ and $\ker \Omega_\pi$, respectively, into a symplectic isomorphism. Moreover, it conjugates the actions of $\Sp(\Omega_{\tau}, \Z)|_H$ and $\Sp(\Omega_{\pi}, \Z)|_H$, and also the actions of $\RV^+(\tau)|_H$ and a subgroup of $\RV^+(\pi)|_H$.
	\end{lem}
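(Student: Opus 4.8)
The plan is to make the injection $\iota$ explicit on the natural bases and then to verify the two compatibilities the statement asks for: with the (possibly degenerate) intersection form, and with the Kontsevich--Zorich cocycle.

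\emph{Describing $\iota$.} Since $\tau$ is obtained from $\pi$ by erasing $\alpha$, every side of $P_\tau$ survives in $P_\pi$, so each generator $\theta_\beta$ of $H_1(M_\tau \setminus \Sigma_\tau)$, $\beta \in \B$, has an evident counterpart in $M_\pi$; the only discrepancy is that the path realising $\theta_\beta$ inside $P_\pi$ may now have to cross one or both of the two $\alpha$-sides, absent from $P_\tau$. Inspecting the combinatorial positions that $\alpha$ can occupy --- one copy in each row, both copies in the top row, both in the bottom row, and the sub-case where the two copies are consecutive --- one finds $\iota(\theta_\beta) = \theta_\beta + c_\beta\, \theta_\alpha$ with $c_\beta \in \{-1,0,1\}$. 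Hence $\{\iota(\theta_\beta)\}_{\beta \in \B} \cup \{\theta_\alpha\}$ differs from the full natural basis $\{\theta_\delta\}_{\delta \in \B \cup \{\alpha\}}$ by a unipotent change of coordinates, so it is again a basis of $H_1(M_\pi \setminus \Sigma_\pi)$; in particular $\iota$ is injective and $\mathrm{im}(\iota)$ is a complement of $\langle\theta_\alpha\rangle$. (Equivalently, $\iota$ is the natural section of the collapse $M_\pi \to M_\tau$ obtained by contracting the sides labelled $\alpha$.)

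\emph{$\iota$ is a symplectic isomorphism modulo the kernels.} I would compare the defining formula of $\Omega_\pi$ with that of $\Omega_\tau$ term by term: inserting $\alpha$ only shifts the indices $i_\beta, j_\beta$ of the surviving letters and creates the new entries $(\Omega_\pi)_{\alpha\beta}$. Running this bookkeeping through the positions above yields $\Omega_\pi\big(\iota(\theta_\beta), \iota(\theta_{\beta'})\big) = \Omega_\tau(\theta_\beta, \theta_{\beta'})$ for all $\beta, \beta' \in \B$, together with $\theta_\alpha \in \ker \Omega_\pi$ --- consistent with the fact that a genus-preserving insertion adds one singularity, so that $\dim\ker\Omega_\pi = \dim\ker\Omega_\tau + 1$. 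The first identity gives $\iota^{-1}(\ker\Omega_\pi) = \ker\Omega_\tau$ (if $\iota(x)$ is $\Omega_\pi$-orthogonal to everything, then in particular to every $\iota(y)$, so $x \in \ker\Omega_\tau$; the converse is clear), so $\iota$ descends to an injective symplectic map $\bar\iota$ between the quotients. Because $\pi$ is a \emph{genus-preserving} extension, both quotients have dimension $2g$, whence $\bar\iota$ is a symplectic isomorphism. Conjugation by $\bar\iota$ then identifies $\Sp(\Omega_\tau, \Z)|_H$ with $\Sp(\Omega_\pi, \Z)|_H$, which is the second assertion.

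\emph{Compatibility with $\RV^+$.} It remains to show $\bar\iota\, \RV^+(\tau)|_H\, \bar\iota^{-1} \leq \RV^+(\pi)|_H$. For an arrow $\eta$ in the Rauzy class of $\tau$ with winner $\alpha_{\mathrm{w}}$ and loser $\alpha_{\mathrm{l}}$, I would trace the Kontsevich--Zorich matrices along $\gamma = \mathcal{E}_*(\eta)$ in the three cases of the definition of $\mathcal{E}_*$. On the last arrow the winner and loser are again $\alpha_{\mathrm{w}}, \alpha_{\mathrm{l}}$; since $\iota$ is symplectic the sign of $\langle\theta_{\alpha_{\mathrm{l}}}, \theta_{\alpha_{\mathrm{w}}}\rangle$ --- hence the shape of $B_\eta$ --- is unchanged, so this arrow contributes exactly $B_\eta$ read in the basis $\{\iota(\theta_\beta)\}_{\beta\in\B} \cup \{\theta_\alpha\}$. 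On the intermediate arrows the loser is $\alpha$, so their matrices are elementary in the $\alpha$-row only and, $\theta_\alpha$ lying in $\ker\Omega_\pi$, they act trivially on $H_1(M_\pi \setminus \Sigma_\pi)/\ker\Omega_\pi$. Carrying this out in cases \eqref{case1}--\eqref{case3} (and their consecutive sub-cases) gives $B_\gamma|_H = \bar\iota \circ B_\eta|_H \circ \bar\iota^{-1}$. Since $\mathcal{E}_*$ sends cycles based at $\tau$ to cycles based at $\pi$ and a generating set to a generating set, this exhibits the conjugated copy of $\RV^+(\tau)|_H$ as a subgroup of $\RV^+(\pi)|_H$.

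\emph{Main obstacle.} The conceptual skeleton is short; the real content is the two combinatorial verifications --- that $\iota$ respects the intersection form, and that the $\mathcal{E}_*$-path realises $B_\eta$ on the symplectic quotient. Both demand a careful enumeration of the positions of $\alpha$ relative to the corners and rows of $\pi$, together with the eight-case sign rule for $\Omega_\pi$ and the two possible shapes of $B_\gamma$. I expect the $\mathcal{E}_*$-computation, especially the consecutive-$\alpha$ sub-cases with three arrows and the bookkeeping of which occurrence of the winner is used at each step, to be the most delicate point; everything else is formal, with the genus-preservation hypothesis entering only to upgrade the injective symplectic map $\bar\iota$ to an isomorphism.
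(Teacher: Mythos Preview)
Your overall plan --- show $\iota$ is symplectic, then verify that $B_{\mathcal{E}_*(\eta)}$ restricts to $B_\eta$ --- is the same as the paper's. But the execution of the $\RV^+$ step has a genuine gap.

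The claim that $\theta_\alpha \in \ker\Omega_\pi$ is false. Take $\tau = \left(\begin{smallmatrix} A & B \\ B & A \end{smallmatrix}\right)$ and the genus-preserving simple extension $\pi = \left(\begin{smallmatrix} A & \alpha & B \\ B & \alpha & A \end{smallmatrix}\right)$: one computes $\Omega_\pi = \left(\begin{smallmatrix} 0 & 1 & 1 \\ -1 & 0 & 1 \\ -1 & -1 & 0 \end{smallmatrix}\right)$, whose kernel is spanned by $e_A - e_\alpha + e_B$, and $e_\alpha$ is not in it. Genus-preservation only says that adjoining the $\alpha$-row and column does not increase the rank, not that this row is zero. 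Your deduction that the $\alpha$-loser arrows ``act trivially on $H_1(M_\pi\setminus\Sigma_\pi)/\ker\Omega_\pi$'' therefore fails: for the row action used throughout the paper one has $v(\Id + E_{\alpha\ast}) = v + v_\alpha e_\ast$, a change along $e_\ast$, not along $e_\alpha$, so it is not absorbed by $\ker\Omega_\pi$ even when $\theta_\alpha$ does lie there.

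What makes the argument work in the paper is that the canonical injection is simply the coordinate map $\theta_\beta \mapsto \theta_\beta$ (no $c_\beta\theta_\alpha$ correction --- your geometric description overcomplicates this), so every $v \in \mathrm{im}\,\iota$ has $v_\alpha = 0$ and hence $vE_{\alpha\ast} = vE_{\alpha\alpha} = 0$. The paper then writes out $B_\gamma^{-1}$ explicitly in each sub-case and observes that every extra factor beyond $B_\eta^{-1}$ has $\alpha$ as its row index, giving $\iota(u)B_\gamma^{-1} = \iota(uB_\eta^{-1})$ directly on $\mathrm{im}\,\iota$ rather than on the whole quotient. Note also that the arrow whose winner and loser match those of $\eta$ is the \emph{first} arrow of $\mathcal{E}_*(\eta)$, not the last: in case~\eqref{case1} the sequence of (winner, loser) pairs is $(\ast,\beta')$, then $(\ast,\alpha)$, then possibly $(\ast,\alpha)$ again. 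This ordering matters, since one needs the first arrow to preserve the condition $v_\alpha = 0$ before invoking the vanishing on the subsequent $\alpha$-loser arrows.
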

	\begin{proof}
		First observe that $H_1(M_\tau \setminus \Sigma_\tau) / \ker \Omega_\tau$ and $H_1(M_\pi \setminus \Sigma_\pi) / \ker \Omega_\pi$ have the same rank as the genus is preserved. Therefore, $\iota$ maps $\ker \Omega_\tau$ into $\ker \Omega_\pi$ and descends to a well-defined symplectic isomorphism which we will also denote by $\iota$.

		Now, let $\eta$ be an arrow in the Rauzy class of $\tau$ starting at $\tau$ and let $\gamma = \mathcal{E}_*(\eta)$. Assume that $\gamma$ is constructed as case \eqref{case1} in the definition. Let $\beta$ and $\beta'$ be the last letters in the top and bottom rows of $\tau$, respectively. We have several possible cases for the values of $B_\gamma^{-1}$ depending on whether $(\Omega_\pi)_{\beta'\beta} \neq 0$, $(\Omega_\pi)_{\alpha\beta} \neq 0$ and whether the occurrences of $\alpha$ are consecutive in $\pi$, which are listed below:
		{\small\begin{align*}
			(\Id - E_{\beta' \beta})(\Id - E_{\alpha \beta}) &= (\Id - E_{\beta'\beta}) - E_{\alpha \beta} \\
			(\Id - E_{\beta' \beta})(\Id - E_{\alpha \beta} - 2E_{\alpha \alpha}) &= (\Id - E_{\beta'\beta}) + (- E_{\alpha \beta} - 2E_{\alpha\alpha}) \\
			(\Id - E_{\beta' \beta} - 2E_{\beta'\beta'})(\Id - E_{\alpha \beta}) &= (\Id - E_{\beta'\beta} - 2E_{\beta'\beta'}) - E_{\alpha \beta} \\
			(\Id - E_{\beta' \beta} - 2E_{\beta'\beta'})(\Id - E_{\alpha \beta} - 2E_{\alpha \alpha}) &= (\Id - E_{\beta'\beta} - 2E_{\beta'\beta'}) + (- E_{\alpha \beta} - 2E_{\alpha\alpha}), \\
			(\Id - E_{\beta' \beta})(\Id - E_{\alpha \beta})^2 &= (\Id - E_{\beta'\beta}) - 2E_{\alpha \beta} \\
			(\Id - E_{\beta' \beta})(\Id - E_{\alpha \beta} - 2E_{\alpha \alpha})^2 &= \Id - E_{\beta'\beta} \\
			(\Id - E_{\beta' \beta} - 2E_{\beta'\beta'})(\Id - E_{\alpha \beta})^2 &= (\Id - E_{\beta'\beta} - 2E_{\beta'\beta'}) - 2E_{\alpha \beta} \\
			(\Id - E_{\beta' \beta} - 2E_{\beta'\beta'})(\Id - E_{\alpha \beta} - 2E_{\alpha \alpha})^2 &= \Id - E_{\beta'\beta} - 2E_{\beta'\beta'},
		\end{align*}}
		where we used that the three letters $\alpha$, $\beta'$ and $\beta$ are distinct. From these relations, it is easy to see that, in any case, $\iota(u B_{\eta}^{-1}) = \iota(u) B_{\gamma}^{-1}$ for every $u \in H_1(M_\tau \setminus \Sigma_\tau)$. Indeed, one has that $\iota(v) E_{\alpha \beta} = \iota(v) E_{\alpha \alpha} = 0$ for any $v \in H_1(M_\tau \setminus \Sigma_\tau)$ as its $\alpha$-coordinate is $0$. We obtain that:
		\[
			\iota(u) B_\gamma^{-1} = \begin{cases}
				\iota(u)(\Id - E_{\beta' \beta}) & \langle \theta_\beta, \theta_{\beta'} \rangle \neq 0  \\
				\iota(u)(\Id - E_{\beta' \beta} - 2E_{\beta' \beta'}) & \langle \theta_\beta, \theta_{\beta'} \rangle = 0.
			\end{cases}
		\]
		On the other hand, by definition,
		\[
			u B_{\eta}^{-1} = \begin{cases}
				u(\Id - E_{\beta' \beta}) & \langle \theta_\beta, \theta_{\beta'} \rangle \neq 0  \\
				u(\Id - E_{\beta' \beta} - 2E_{\beta' \beta'}) & \langle \theta_\beta, \theta_{\beta'} \rangle = 0,
			\end{cases}
		\]
		so $\iota(u B_{\eta}^{-1}) = \iota(u) B_{\gamma}^{-1}$ as the $\alpha$-coordinate of $\iota(u B_{\eta}^{-1})$ is also $0$.
				
		Similar computations for cases \eqref{case2} and \eqref{case3} show that $\iota_*$ is a monomorphism mapping $\RV^+(\tau)|_H$ to a subgroup of $\RV^+(\pi)|_H$.
	\end{proof}
	
	\subsection{The ``minus'' Rauzy--Veech group}	A similar construction can be used to define the ``minus'' Rauzy--Veech group encoding the action of the Rauzy--Veech induction in the homology of a double cover. We will give a \emph{partial} explicit definition of such group which will be enough to prove our results.
	
	We start by recalling the canonical orientable double cover construction of a quadratic differential $M_\pi$. Let $P_\pi^0 = P_\pi$ and $P_\pi^1$ be a translation $-P_\pi$ which is disjoint from $P_\pi$. Let $\tilde{P}_\pi = P_\pi^0 \cup P_\pi^1$ and $p \colon \tilde{P}_\pi \to P_\pi$ be the natural two-to-one covering between $\tilde{P}_\pi$ and $P_\pi$. Let $\iota' \colon \tilde{P}_\pi \to \tilde{P}_\pi$ be the involution exchanging $P_\pi^0$ and $P_\pi^1$ by translation and central symmetries. We label the sides of $\tilde{P}_\pi$ with the alphabet $\A \times \{0, 1\}$ so that the following conditions hold for any $\alpha \in \A$ and $\varepsilon \in \{0, 1\}$:
	\begin{itemize}
		\item $p$ maps an $(\alpha, \varepsilon)$-side of $\tilde{P}_\pi$ to an $\alpha$-side of $P_\pi$;
		\item $\iota'$ maps an $(\alpha, \varepsilon)$-side of $\tilde{P}_\pi$ to an $(\alpha, 1-\varepsilon)$-side of $\tilde{P}_\pi$;
		\item $P_\pi^0$ contains both $(\alpha, 0)$-sides of $\tilde{P}_\pi$ if and only if $\alpha$ occurs in both rows of $\pi$. 
	\end{itemize}
	These conditions ensure that, when identifying equally-labeled sides, one obtains a valid translation surface $\tilde{M}_\pi$, equipped with a well-defined involution $\iota \colon \tilde{M}_\pi \to \tilde{M}_\pi$ induced by $\iota'$, whose quotient $\tilde{M}_\pi/\iota$ is $M_\pi$. See \Cref{fig:cover} for some examples of this construction. Let $\tilde{\Sigma}_\pi = p^{-1}(\Sigma_\pi)$.
	
	\begin{figure}
		\includegraphics[width=0.7\textwidth]{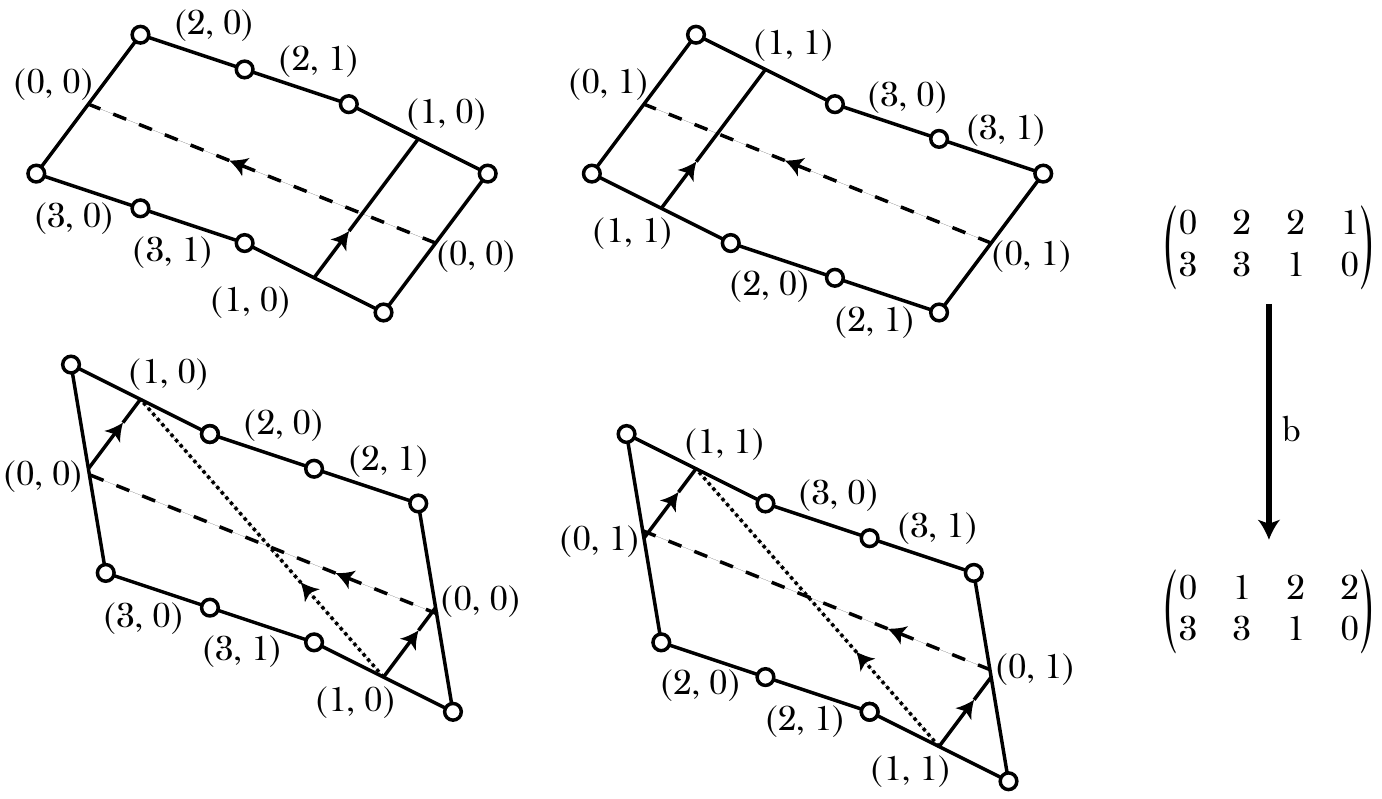}
	 	\caption{Some examples of the double cover construction and a Rauzy operation on it. The solid lines represent $\tilde{\theta}_{\alpha_{\mathrm{l}}}$, the dashed lines $\tilde{\theta}_{\alpha_{\mathrm{w}}} = \tilde{\theta}_{\alpha_{\mathrm{w}}}'$ and the dotted lines $\tilde{\theta}_{\alpha_{\mathrm{l}}}'$. When the winning letter occurs in both rows, the map $S^-(\pi') \to S^-(\pi)$ is represented in these bases as $\Id + E_{\alpha_{\mathrm{l}}\alpha_{\mathrm{w}}}$ if the losing letter belongs to $\mathcal{A}_{\mathrm{tb}}$.}
	 	\label{fig:cover}
	\end{figure}
	
	The involution $\iota$ induces a splitting $H_1(\tilde{M}_\pi \setminus \tilde{\Sigma}_\pi) = H^+(\pi) \oplus H^-(\pi)$ consisting of the elements of $H_1(\tilde{M}_\pi \setminus \tilde{\Sigma}_\pi)$ which are invariant and anti-invariant for $\iota$, respectively. Let $\A_{\mathrm{tb}} \subseteq \A$ be the set of letters occurring in both rows of $\pi$. For $\alpha \in \A_{\mathrm{tb}}$, the cycle $\theta_\alpha \in H_1(M_\pi \setminus \Sigma_\pi)$ lifts to two possible cycles $\tilde{\theta}_\alpha^0, \tilde{\theta}_\alpha^1 \in H_1(\tilde{M}_\pi \setminus \tilde{\Sigma}_\pi)$, which can be though of as belonging to $P_\pi^0$ and $P_\pi^1$, respectively. We define an alternate form indexed by $\A_{\mathrm{tb}}$ as:
	\[
		(\tilde{\Omega}_\pi)_{\alpha\beta} =
		\begin{cases}
			+2 & i_\alpha < i_\beta \text{ and } j_\alpha > j_\beta \\
			-2 & i_\beta < i_\alpha \text{ and } j_\beta > j_\alpha \\
			0 & \text{otherwise,}
		\end{cases}
	\]
	where $i_\alpha < j_\alpha$ and $i_\beta < j_\beta$ are the positions of $\alpha$ and $\beta$ in $\pi$, as above. Clearly, $(\tilde{\Omega}_\pi)_{\alpha\beta} = 2(\Omega_\pi)_{\alpha\beta}$. This matrix is the intersection form of the cycles $\{\tilde{\theta}_\alpha\}_{\alpha \in \A_{\mathrm{tb}}}$, defined as $\tilde{\theta}_\alpha = \tilde{\theta}_\alpha^0 - \tilde{\theta}_\alpha^1$, which belong to $H^-(\pi)$. Let $S^-(\pi)$ be the subspace of $H^-(\pi)$ generated by the $\{\tilde{\theta}_\alpha\}_{\alpha \in \A_{\mathrm{tb}}}$.
	
	Assume that the generalized permutation $\pi$ is obtained by a sequence of simple extensions starting at the permutation $\tau$. By definition of the extension map $\mathcal{E}_*$, a duplicate letter is never the winner of arrows in $\mathcal{E}_*(\eta)$, where $\eta$ is an arrow on the Rauzy class of $\tau$. For this reason, the set of letters occurring in both rows remains the constant while traversing these arrows. We will give a partial definition of the ``minus'' Rauzy--Veech group, in the sense that it does not encode the entire action on $H^-(\pi)$, but rather its action on the subspace $S^-(\pi)$.
	
	As for the ``plus'' case, let $\tilde{\RR}$ be the undirected Rauzy class of $\pi$. Let $\gamma = \pi \to \pi'$ be an arrow in $\RR$ and let $\alpha_{\mathrm{w}}$ and $\alpha_{\mathrm{l}}$ be, respectively, the winner and loser of the operation sending $\pi$ to $\pi'$. We assume that $\alpha_{\mathrm{w}} \notin \A_{\mathrm{tb}}$ as discussed above. We define the \emph{``minus'' Kontsevich--Zorich matrix} indexed by $\A_{\mathrm{tb}} \times \A_{\mathrm{tb}}$ as the change of basis matrix mapping $\{\tilde{\theta}_\alpha'\}_{\alpha \in \A}$ to $\{\tilde{\theta}_\alpha\}_{\alpha \in \A}$, where $\{\tilde{\theta}_\alpha'\}_{\alpha \in \A}$ for $M_{\pi'}$ is defined in an analogous way as $\{\theta_\alpha\}_{\alpha \in \A}$ for $M_\pi$. The resulting matrices are:
	\[
		\tilde{B}_{\gamma} = 
		\begin{cases}
			\Id + E_{\alpha_{\mathrm{l}}\alpha_{\mathrm{w}}} & \alpha_{\mathrm{l}} \in \A_{\mathrm{tb}} \\
			\Id & \alpha_{\mathrm{l}} \notin \A_{\mathrm{tb}}
		\end{cases}
	\]
	See \Cref{fig:cover} for some of the computations. We extend this definition for reversed arrows and walks to define the ``minus'' Rauzy--Veech group:
	\begin{defn}
		Let $\RR$ be a Rauzy class and $\pi \in \RR$ be a fixed vertex. We define the \emph{``minus'' Rauzy--Veech group} $\RV^-(\pi)$ as the set of matrices of the form $\tilde{B}_\gamma \in \Sp(\tilde{\Omega}_\pi, \Z)$ where $\gamma$ is a cycle on $\tilde{\RR}$ with endpoints at $\pi$ such that no duplicate letter is the winner of an arrow of $\gamma$. We will always consider the action of $\RV^-(\pi)$ on row vectors.
	\end{defn}
	
	\begin{rem}
		We have defined the ``minus'' Rauzy--Veech group $\RV^-(\pi)$ restricted to the subspace $S^-(\pi)$ of $H_1(\tilde{M}_\pi \setminus \tilde{\Sigma}_\pi)$ explicitly. We can also define its action on $H_1(\tilde{M}_\pi \setminus \tilde{\Sigma}_\pi)$ in the same way, but we will not use explicit bases or matrices. Nevertheless, in analogy with the ``plus'' case, it is clear that this group preserves an intersection form and that it is well-defined for a Rauzy class. We will see below that, in some cases, our partial definition is sufficient to prove that the index of the ``minus'' Rauzy--Veech group is finite inside its ambient symplectic group.
	\end{rem}
	
	Let $\S$ be a connected component of a stratum of quadratic differentials. Assume that there exists a generalized permutation $\pi$ representing $\S$ obtained by genus-preserving simple extensions from a permutation $\tau$.
	
	If $M_\pi$ belongs to the stratum $\Q(2m_1-1, \dotsc, 2m_s-1, 2m_{s+1}, \dots, 2m_{n})$, then it is well-known that  $\tilde{M}_\pi$ belongs to the stratum $\H(2m_1, \dotsc, 2m_s, m_{s+1}, m_{s+1}, \dotsc, m_n, m_n)$. Thus, if $g$ is the genus of $M_\pi$ and $\tilde{g}$ is the genus of $\tilde{M}_\pi$, we have that
	\[
		2\tilde{g} - 2 = 2\sum_{j = 1}^n m_j = 2\sum_{j = 1}^n m_j - s + s = 4g - 4 + s, \text{ so } \tilde{g} = 2g - 1 + \frac{s}{2}.
	\]
	Let $q \colon H_1(\tilde{M}_\pi \setminus \tilde{\Sigma}_\pi) \to H_1(\tilde{M}_\pi)$ be the map obtained by ``forgetting the punctures''. If $M_\pi$ has exactly two singularities of odd order, then $\tilde{g} = 2g$. Since $\dim H_1(\tilde{M}_\pi) = 2\tilde{g}$, we have that $\dim q(H_1^+(\pi)) = \dim q(H_1^-(\pi)) = 2g$. We conclude that the simple extensions taking $\tau$ to $\pi$ are ``doubly genus-preserving''.
	
	By hypothesis, we have that the rank of $\Omega_\tau$ is $2g$. Moreover, since the alphabet $\B$ of $\tau$ is contained in $\A_{\mathrm{tb}}$ and $(\tilde{\Omega}_\pi)_{\alpha\beta} = 2(\Omega_\pi)_{\alpha\beta}$ for every $\alpha, \beta \in \B$, we have that the rank of $\tilde{\Omega}_\pi$ is also $2g$. Therefore, in analogy with the Abelian and the ``plus'' cases, $\tilde{\Omega}_\pi$ descends to the quotient $S^-(\pi) / \ker \tilde{\Omega}_\pi$ into a non-degenerate symplectic form. We denote the group of symplectic automorphisms of this space by $\Sp(\tilde{\Omega}_\pi, \Z)|_H$. Let $\RV^-(\pi)|_H \leq \Sp(\tilde{\Omega}_\pi, \Z)|_H$ be the group induced by the right action of $\RV^-(\pi)$ on this quotient. We can replicate the proof of \Cref{lem:extension_matrices} to conclude that it is enough to find simple extensions as above to obtain that the index of $\RV^-(\pi)|_H$ is finite in its ambient symplectic group $\Sp(\tilde{\Omega}_\pi, \Z)|_H$.
	\medbreak
	
	\section{Proof of \texorpdfstring{\Cref{thm:main}}{Theorem 1.1}} \label{sec:proof}
	
	By the discussion in the previous section, it is enough to find genus-preserving simple extensions starting at Abelian strata and ending at the desired connected components. The proof is divided in four cases: connected strata, hyperelliptic components, non-hyperelliptic components and exceptional strata. From now on, we will omit the words ``plus'' and ``minus'' when speaking of a Rauzy--Veech group, since the arguments work in both cases (provided that the relevant hypotheses are satisfied).
	
	\subsection{Connected strata} We will start by showing that simple extensions of generalized permutations that are not permutations allow us to ``break up'' the singularities of the meromorphic quadratic differential in any possible way. This lemma is analogous to the case of Abelian differentials \cite[Lemma 6.5]{G:zariski} and will allows us to obtain the proof of \Cref{thm:main} for connected strata.
	
	\begin{lem} \label{lem:extension}
		Let $\tau$ be a generalized permutation $\tau \colon \A \to \{1, \dotsc, 2d\}$. Assume that the surface $M_{\tau} \in \Q(m_1, \dotsc, m_n)$ where $m_1 \geq 1$ and $m_i \geq -1$ for every $2 \leq i \leq n$. Then, there exists an irreducible generalized permutation $\pi$ such that $M_{\pi} \in \Q(m_{1,1}, m_{1,2}, m_2, \dotsc, m_n)$ and such that $\pi$ is a simple extension of $\tau$, where $m_{1,1}, m_{1,2} \geq -1$ are any integers satisfying $m_{1,1} + m_{1,2} = m_1$.
	\end{lem}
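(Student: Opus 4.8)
The plan is to adapt the argument for Abelian differentials \cite[Lemma 6.5]{G:zariski} to strict generalized permutations, using the combinatorial description of the singularities of $M_\tau$ in terms of the table of $\tau$ and the involution $\sigma$. The first step is to set up this bookkeeping: the $2d$ side-ends of the polygon $P_\tau$, grouped by the rule ``move along the boundary of $P_\tau$, then cross over via the gluing'', fall into cyclically ordered classes, one per singularity, and the order $m_j$ of a singularity is read off from the length of its class. In particular the cyclic class attached to the order-$m_1$ singularity $\Sigma$ has length determined by $m_1 + 2$, so the hypothesis $m_1 \geq 1$ guarantees it is long enough to be cut into two nonempty arcs. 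Note that $\pi$ will be strict because $\tau$ is, so $M_\pi$ is defined and the irreducibility of $\pi$ will follow from the lemma above on irreducibility of simple extensions of strict generalized permutations.

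The second step is to record how a simple extension acts on this picture. Inserting the new letter $\alpha$ at two chosen slots --- neither at the end of a row, and with at most one at the beginning of a row --- adds the two $\alpha$-sides to $P_\pi$ and glues them together (by a translation or a central symmetry according to whether the two occurrences lie in different rows or the same one). The change is local: every singularity of $M_\tau$ whose class avoids the two insertion slots is left untouched, and the class of $\Sigma$ is spliced at the two slots. The content of the lemma is that one can choose the slots so that this splice separates the single class of $\Sigma$ into exactly two classes --- hence exactly one extra singularity --- whose orders are any prescribed $m_{1,1}, m_{1,2} \geq -1$ with $m_{1,1} + m_{1,2} = m_1$. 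The genus is automatically preserved, since the construction adds one edge and one vertex to the glued surface and so leaves its Euler characteristic unchanged.

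The third step is the construction itself. Fix a side-end of $P_\tau$ in the class of $\Sigma$, list that cyclic class, and insert the first occurrence of $\alpha$ just after the slot realizing one chosen element of the class and the second occurrence just after the slot realizing a second chosen element, the choice being dictated by the arc lengths needed to produce orders $m_{1,1}$ and $m_{1,2}$; the extreme values $m_{1,j} = -1$ are obtained by placing the corresponding occurrence of $\alpha$ at the beginning of a row, which is still allowed since the other occurrence then sits strictly inside its row. One checks in each case that $\alpha$ is at the end of no row and at the beginning of at most one, so $\pi$ is a genuine simple extension of $\tau$, irreducible by the lemma cited above; and that, letting the two occurrences of $\alpha$ fall in the same row or in different rows as needed, every admissible pair $(m_{1,1}, m_{1,2})$ is realized, which also yields the intended application to connected strata in the proof of \Cref{thm:main}.

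The main obstacle is the precise bookkeeping behind the second and third steps: one must pin down exactly how the class permutation changes when the two $\alpha$-sides are inserted and glued --- in particular that the insertion slots can always be chosen among those belonging to $\Sigma$ and compatible with the simple-extension constraints, so that no other singularity is disturbed and no two singularities are accidentally merged --- and then verify the arc-length count so that every target $(m_{1,1}, m_{1,2})$ with both entries at least $-1$, including the degenerate cases producing a simple pole, is attained without ever placing $\alpha$ at the end of a row. This is the analogue of the corresponding verification in the Abelian case, and is expected to be only mildly more delicate because of the two possible kinds of gluing.
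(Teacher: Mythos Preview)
Your plan coincides with the paper's strategy. The paper makes your ``cyclic class'' explicit via a bijection $s\colon\{1,\dots,2d\}\to\{1,\dots,2d\}$ encoding clockwise rotation around a vertex, with the order of the singularity through a side $k$ read off as $|\mathrm{Orb}_s(k)\setminus\{1,\ell+m\}|-2$. It then chooses a position $j$ with $1<j\le\ell$ in the orbit of the target singularity, walks along that orbit to the $(2+m_{1,1})$-th element, and inserts the new letter just before each of the two positions so located; the case split is whether the second position lands in the bottom row or in the top row, and in each case the orbits of the new bijection $s'$ are computed explicitly. Your ``main obstacle'' is precisely this computation, and your expectation that it is only mildly more delicate than the Abelian case is borne out.

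The one place your plan diverges is the pole case $m_{1,j}=-1$. You propose to realize it by placing one occurrence of $\alpha$ at the beginning of a row. The paper does not do this --- indeed the Remark following \Cref{cor:extension_abelian} records that in the construction no letter is ever inserted at the beginning of a row, a fact then used to build a suspension datum for $\pi$. Instead, the pole arises as the degenerate sub-case $i=j$ of the same-row insertion: two \emph{consecutive} copies of $\alpha$ inside the top row, for which one checks directly that the $s'$-orbit of the new position $i'$ is a singleton. Your beginning-of-row mechanism would tie the new pole to whichever singularity already sits at the corner $0$ (or $e_\tau$) of $P_\tau$, which need not be the one you are trying to split; the consecutive-insertion trick avoids this constraint and keeps the construction uniform.
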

	
	\begin{proof}		
		Let $p \colon P_{\tau} \to M_{\tau}$ be the projection map obtained by identifying the sides of the polygon $P_{\tau}$ by translation and rotation.
	
		Consider the bijection $s \colon \{1, \dotsc, 2d\} \to \{1, \dotsc, 2d\}$ defined by:
		\begin{itemize}
			\item $s(k) = \sigma(k-1)$ if $1 < k \leq \ell$;
			\item $s(1) = \sigma(\ell+1)$;
			\item $s(k) = \sigma(k+1)$ if $\ell + 1 \leq k < \ell + m$; and
			\item $s(\ell + m) = \sigma(\ell)$.
		\end{itemize}
		
		The bijection $s$ encodes the process of turning around a marked point in a clockwise manner. Indeed, the set $\{1, \dotsc, \ell\}$ corresponds to the top sides of $P_\tau$, while the set $\{\ell+1, \dotsc, \ell+m\}$ corresponds to the bottom sides. For $1 \leq k \leq \ell$, let $z \in P_\tau$ be its left endpoint. The orbit of $k$ by $s$ is equal to the set of top sides of $P_\tau$ whose left endpoints $z'$ satisfy $p(z') = p(z)$ and the bottom sides of $P_\tau$ whose right endpoints $z'$ satisfy $p(z') = p(z)$.
		
		We can use the orbit by $s$ to compute the conical angle of $p(z)$. Indeed, it is easy to see that such angle is equal to $\tau |\mathrm{Orb}_s(k) \setminus \{ 1, \ell+m \}|$.
		
		Now, let $z \in \C$ be a vertex of $P_{\tau}$ such that $p(z)$ is the conical singularity of order $m_1$. We can assume that $z \neq 0, e_\tau$. Indeed, let $k$ be a top side whose left vertex is $z$ or a bottom side whose right endpoint is $z$. Since $m_1 \geq 2$, one has that $|\mathrm{Orb}_s(k) \setminus \{ 1, \ell+m \}| \geq 3$, so there exists $1 < j \leq \ell$ whose left vertex $z'$ satisfies $p(z') = p(z)$ or $\ell + 1 \leq j < \ell + m$ whose right vertex $z'$ satisfies $p(z') = p(z)$. We replace $z$ by $z'$ if necessary. For the rest of the proof, we will assume that $2 \leq j \leq \ell$, since the other case is analogous.
		
		Consider the set $\mathrm{Orb}_s(j) \setminus \{1, \ell + m\}$, ordered by the order its elements occur when applying  $s$ to $j$ iteratively. Its cardinality is equal to $2 + m_1 = 2 + m_{1,1} + m_{1,2}$. We will consider two cases:
		
		If the $(2 + m_{1,1})$-th element is in the bottom row, then the proof is very similar to the Abelian case. Nevertheless, we will present it in full detail for completeness. Let $\sigma(i) \neq j$ be the $(3 + m_{1,1})$-th element. For a letter $\alpha' \notin \A$, we define the simple extension $\pi$ of $\tau$ by inserting the letter $\alpha'$ just before the letters at positions $i$ and $j$. That is, for every $k \in \{1, \dotsc, 2(d+1)\}$, 
		\[
			\pi(k) = \begin{cases}
				\tau(k) & k < \min\{i, j\} \\
				\alpha' & k = \min\{i, j\} \\
				\tau(k-1) & \min\{i, j\} < k \leq \max\{i, j\} \\
				\alpha' & k = \max\{i,j\} + 1 \\
				\tau(k-2) & \max\{i,j\} + 1 < k \leq \ell + m + 2
			\end{cases}
		\]
		and that the type of $\pi$ is $(\ell', m') = (\ell+1,m+1)$.
		
		Let $\iota \colon \{1, \dotsc, 2d\} \to \{1, \dotsc, 2(d+1)\}$ be the order-preserving injective map such that $\pi \circ \iota = \tau$. Let $i', j'$ be the positions of $\alpha'$ in $\pi$. We choose them so $i' < j'$ if and only if $i < j$.
		
		We will now prove that $M_{\pi} \in \Q(m_{1,1}, m_{1,2}, m_2, \dotsc, m_n)$. Let $\sigma'$ be the involution associated to $\pi$. Consider the bijection $s' \colon \{1, \dotsc, 2(d+1)\} \to\{1, \dotsc, 2(d+1)\}$ defined for $\pi$ in an analogous way as $s$ for $\tau$. We have that:
		
		\begin{itemize}
			\item $s'(j') = \iota(s(j))$;
			\item $s'(j' + 1) = i'$;
			\item $s'(i') = \iota(\sigma(i))$;
			\item $s'(1) = j'$ if $i' = \ell+1$ and $s'(i'-1) = j'$ otherwise;
		\end{itemize}
		and $s'(\iota(k)) = \iota(s(k))$ for any other $k \in \{1, \dotsc, 2d\}$.
		
		Let $k$ be the smallest natural number satisfying $s^{k+1}(j) = \sigma(i)$. If $i = \ell+1$, then $s^k(j) = 1$ and, otherwise, $s^k(j) = i-1$. In any case, $s'(\iota(s^k(j))) = j'$. Moreover, since we have that $s'(j') = \iota(s(j))$ we obtain that the orbit of $j'$ by $s'$ is:
		\[
			j', \iota(s(j)), \iota(s^2(j)), \dotsc, \iota(s^k(j)),
		\]
		so, by the choice of $i$, $|\mathrm{Orb}_{s'}(j') \setminus \{ 1, \ell' + m' \}| = 2 + m_{1,1}$.

		On the other hand, let $k$ be the smallest natural number satisfying $s^{k + 1}(\sigma(i)) = j$. The orbit of $j' + 1$ by $s'$ is:
		\[
			j' + 1, i', \iota(\sigma(i)), \iota(s(\sigma(i))), \iota(s^2(\sigma(i))), \dotsc, \iota(s^k(\sigma(i))),
		\]
		so $|\mathrm{Orb}_{s'}(j'+1) \setminus \{1,\ell'+m'\}| = 2 + m_1 - (2 + m_{1,1}) + 2 = 2 + m_{1,2}$.
		
		These two orbits are disjoint, so the $j'$-side of $M_{\pi}$ joins two distinct conical singularities of orders $m_{1,1}$ and $m_{1,2}$. Since $s'$ coincides with $s$ outside of these orbits, the orders of the rest of the conical singularities are preserved.
		
		Otherwise, if the $(2 + m_{1,1})$-th element is in the top row, let $i$ be such element (which may be equal to $j$). For a letter $\alpha' \notin \A$, we define the simple extension $\pi$ of $\tau$ by inserting the letter $\alpha'$ just before the letters at positions $i$ and $j$ if $i \neq j$, and twice before the letter at position $i = j$ otherwise. The type of this generalized permutation is $(\ell', m') = (\ell+2,m)$
		
		Let $\iota \colon \{1, \dotsc, \ell+m\} \to \{1, \dotsc, \ell + m + 2\}$ be the order-preserving injective map such that $\pi \circ \iota = \tau$. Let $i' \neq j'$ be the positions of $\alpha'$ in $\pi$. We choose them so $i' < j'$ if and only if $i < j$ (in particular, if $i = j$ then $j' < i'$).
		
		We will now prove that $M_{\pi} \in \Q(m_{1,1}, m_{1,2}, m_2, \dotsc, m_n)$. Let $\sigma'$ be the involution associated to $\pi$. Consider the bijection $s' \colon \{1, \dotsc, 2(d+1)\} \to\{1, \dotsc, 2(d+1)\}$ defined for $\pi$ in an analogous way as $s$ for $\tau$. We have that:
		
		\begin{itemize}
			\item $s'(j') = \iota(s(j))$;
			\item $s'(j' + 1) = i'$;
			\item $s'(i') = \iota(s(i))$ if $i \neq j$ and $s'(i') = i'$ otherwise;
			\item $s'(i' + 1) = j'$;
		\end{itemize}
		and $s'(\iota(k)) = \iota(s(k))$ for any other $k \in \{1, \dotsc, \ell + m\}$.
		
		Let $k$ be the smallest natural number satisfying $s^k(j) = i$. Observe that $\iota(i) = i' + 1$, so the orbit of $j'$ by $s'$ is:
		\[
			j', \iota(s(j)), \iota(s^2(j)), \dotsc, \iota(s^k(j)) = i'+1,
		\]
		so, by the choice of $\beta$, $|\mathrm{Orb}_{s'}(j') \setminus \{ 1, \ell' + m' \}| = 2 + m_{1,1}$.
		
		If $i \neq j$, let $k$ be the smallest natural number satisfying $s^k(i) = j$. Since $\iota(j) = j'+1$, the orbit of $i'$ by $s'$ is:
		\[
			i', \iota(s(i)), \iota(s^2(i)), \dotsc, \iota(s^k(i)) = j'+1,
		\]
		so $|\mathrm{Orb}_{s'}(i') \setminus \{1,\ell'+m'\}| = 2 + m_1 - (2 + m_{1,1}) + 2 = 2 + m_{1,2}$.
		
		Otherwise, we have that if $i = j$ and $m_{1,2} = -1$. In this case, the orbit of $i'$ by $s'$ consists only of $i'$. We obtain that $|\mathrm{Orb}_{s'}(i') \setminus \{1,\ell'+m'\}| = 1 = 2 + m_{1,2}$ as desired.
	\end{proof}
	
	Observe that the proof of the previous lemma can also be applied to a (genuine) permutation. Nevertheless, as explained after \Cref{def:extension}, the resulting simple extension will not satisfy \Cref{conv:2.7} unless it is a permutation as well. This is related to the fact that there exist some global obstructions to ``breaking up'' some zeros of even order into two zeros of odd order using local operations \cite{BCGGM:kdifferentials}. This problem can be solved by using the previous lemma two times:
	
	\begin{cor} \label{cor:extension_abelian}
		Let $\tau$ be a permutation $\pi\colon \A \to \{1, \dotsc, 2d\}$. Assume that the surface $M_{\tau} \in \H(m_1, \dotsc, m_n)$ where $m_1 \geq 1$ and $m_i \geq 1$ for every $2 \leq i \leq n$. Then, there exists a generalized permutation $\pi$ such that $M_{\pi} \in \Q(m_{1,1}, m_{1,2}, m_{1,3}, 2m_2, \dotsc, 2m_n)$ and such that $\pi$ is a simple extension of $\tau$, where $m_{1,1}, m_{1,2}, m_{1,3} \geq -1$ are any integers satisfying $m_{1,1} + m_{1,2} + m_{1,3} = 2m_1$ and $m_{1,1}, m_{1,2}$ are odd.
	\end{cor}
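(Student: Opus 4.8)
The plan is to run the construction from the proof of \Cref{lem:extension} twice in a row, allowing the intermediate generalized permutation to violate \Cref{conv:2.7} and choosing the second step so that it restores the convention.

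Regard $\tau$ as a generalized permutation. Since $M_\tau$ is an Abelian differential, its square is a quadratic differential whose zeros all have even (doubled) order, so $M_\tau \in \Q(2m_1, 2m_2, \dotsc, 2m_n)$. First I would apply the procedure in the proof of \Cref{lem:extension} to split the singularity of order $2m_1$ into two singularities of orders $m_{1,1}$ and $2m_1 - m_{1,1} = m_{1,2} + m_{1,3}$, both of which are odd; this is still a meaningful combinatorial operation (the surface is irrelevant to the construction) even though, as noted in the discussion after \Cref{lem:extension}, its output need not satisfy \Cref{conv:2.7}. Because $\tau$ is a genuine permutation, the $s$-orbit of the chosen side around the order-$2m_1$ singularity has even length and alternates between the two rows once the positions $1$ and $\ell + m$ are removed; since $m_{1,1}$ is odd, the element of that orbit picked out by the construction lies in the same row as the chosen side, so we fall into the case of the proof of \Cref{lem:extension} in which the new letter $\alpha$ is inserted with both of its occurrences in the top row. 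Call the result $\pi_1$: it is a simple extension of $\tau$, it is strict, and its only duplicate letter is in the top row, so it fails \Cref{conv:2.7}; nevertheless the orbit computation of \Cref{lem:extension} shows that its formal singularity data are those of $\Q(m_{1,1}, m_{1,2} + m_{1,3}, 2m_2, \dotsc, 2m_n)$.

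Next I would apply the same procedure to $\pi_1$ to split the singularity of order $m_{1,2} + m_{1,3}$ into singularities of orders $m_{1,2}$ (odd) and $m_{1,3}$ (even), this time using the ``mirror'' form of the construction, based at a bottom side of the polygon, called ``analogous'' in the proof of \Cref{lem:extension}. The $s$-orbit around this singularity in $\pi_1$ alternates between the rows except for a single top--top defect introduced by the two new top positions created in Step~1, and one is free to start reading it at any bottom side meeting the singularity; a parity count then shows that, since $m_{1,2}$ is odd, one can arrange for the new letter $\beta$ to be inserted with both of its occurrences in the bottom row. Let $\pi$ denote the resulting generalized permutation. Then $\pi$ has a duplicate letter in each row, hence satisfies \Cref{conv:2.7}, and the orbit computation of \Cref{lem:extension} gives $M_\pi \in \Q(m_{1,1}, m_{1,2}, m_{1,3}, 2m_2, \dotsc, 2m_n)$. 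Since both insertions respect \Cref{def:extension}, $\pi$ is obtained from $\tau$ by a composition of two simple extensions; and $\pi$ is irreducible by the argument following \Cref{def:extension} applied to the strict extension $\pi_1 \to \pi$, once one checks --- by inspecting the insertion positions of $\alpha$, exactly as in that argument --- that $\pi_1$ itself is irreducible. The degenerate case $m_{1,3} = 0$, in which the second step adds a regular marked point, is handled by a minor variant of the same idea.

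I expect the bookkeeping in Step~2 to be the main obstacle: because $\pi_1$ is no longer a genuine permutation, its $s$-orbits no longer alternate perfectly, so one must locate precisely the defect created in Step~1 inside the orbit of the singularity being split and verify that there remains a choice of starting bottom side for which the mirror construction deposits $\beta$ as a duplicate in the bottom row. This is elementary but fiddly; with it in hand, everything else is a direct application of \Cref{lem:extension} and of the lemmas on simple extensions.
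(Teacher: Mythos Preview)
Your proposal is correct and follows essentially the same approach as the paper: apply \Cref{lem:extension} twice, first inserting a duplicate letter in the top row (using that $m_{1,1}$ is odd) and then one in the bottom row (using that $m_{1,2}$ is odd), so that the final generalized permutation satisfies \Cref{conv:2.7}. Your parity analysis of the $s$-orbit and your treatment of irreducibility and of the degenerate case $m_{1,3}=0$ supply details that the paper leaves implicit or defers to the remark immediately following the corollary.
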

	
	\begin{proof}
		We use the previous lemma two times, first adding a duplicate letter in the top row to split the conical singularity of angle $(2 + 2m_1)\pi$ into two conical singularities of angle $(2 + m_{1,1})\pi$ and $(2 + m_{1,2} + m_{1,3})\pi$. This is possible since, as $m_{1,1}$ is odd, the resulting generalized permutation cannot be a permutation. However, this generalized permutation does not satisfy \Cref{conv:2.7}.
		
		Now, we add a duplicate letter in the bottom row to split the conical singularity of angle $(2 + m_{1,2} + m_{1,3})\pi$ into two conical singularities of angle $(2 + m_{1,2})\pi$ and $(2 + m_{1,3})\pi$. Once again, this is possible because $m_{1,2}$ is odd. We obtain a permutation having a duplicate letter in each row, so it satisfies \Cref{conv:2.7}.
	\end{proof}
	
	\begin{rem}
		The strict generalized permutations produced by the previous corollary can be chosen to be irreducible. Indeed, because of the way we count the conical angle in the proof of \Cref{lem:extension}, no letter is inserted at the beginning of a row. Therefore, we can construct a suspension datum $(\zeta_\alpha')_{\alpha \in \A}$ for $\pi$ starting from the ``canonical'' suspension datum $(\zeta_\alpha)_{\alpha \in \B}$ for $\tau$ by setting $\zeta_\alpha' = 1$ if $\alpha \in \A \setminus \B$.
	
		Furthermore, even if the statements of the previous lemma and corollary forbid marked points which are not singularities, it is easy to see that the proof extends to the particular case of the permutation on a two-letter alphabet representing a torus with one marked point. This allows us to treat the genus-1 case below.
	\end{rem}
	
	With these elements, the proof of \Cref{thm:main} is a straightforward consequence of the classification of Rauzy--Veech groups for Abelian strata \cite{AMY:hyperelliptic,G:zariski}:
	
	\begin{proof}[Proof of \Cref{thm:main} for connected strata]
		Let $\S$ be a connected stratum of quadratic differentials of genus $g$ with at least three singularities (zeros or poles), not all of even order. Clearly, there exist at least two singularities of odd order. Therefore, by the previous lemma and the previous corollary, there exists a sequence of simple extensions starting from any connected component of a minimal stratum of Abelian differentials and ending at $\S$. Indeed, we can first use the corollary once to create two singularities of odd order and then use the lemma iteratively to create the remaining singularities.
		
		If $g \geq 4$, then the Rauzy--Veech group of $\S$ contains the Rauzy--Veech groups of both non-hyperelliptic minimal strata of genus-$g$ Abelian differentials. These groups are maximal subgroups of $\Sp(2g, \Z)$ \cite[Theorem 3]{BGP:finiteindex} \footnote{This result uses the classification of finite simple groups.}, so we obtain that the Rauzy--Veech group of $\S$ is $\Sp(2g, \Z)$. If $g = 3$, then we can use the same argument as before, but using the connected components $\H(4)^\odd$ and $\H(4)^\hyp$. The Rauzy--Veech groups of such components have index $28$ and $288$, respectively. Since $28$ does not divide $288$, we conclude that the Rauzy--Veech group of $\S$ is $\Sp(6, \Z)$ by maximality. If $g = 2$, we do not conclude that the Rauzy--Veech group is the entire ambient symplectic group, but we have that it has finite index because it contains a copy of the Rauzy--Veech group of $\H(2)$, which has index $6$ inside $\Sp(4, \Z)$. Finally, if $g = 1$, we can use the adjacency with the stratum $\H(0)$. Indeed, its Rauzy--Veech group is equal to its entire ambient symplectic group, so the same is true for the Rauzy--Veech group of $\S$.
	\end{proof}
	
	\subsection{Hyperelliptic components} As was shown by Lanneau \cite{L:hyperelliptic}, strata of the form $\Q(4j+2, 2k-1, 2k-1)$ and $\Q(2j-1, 2j-1, 2k-1, 2k-1)$ for any integers $j, k \geq 0$ have a hyperelliptic component. The genera of these strata are $j+k+1$ and $j+k$, respectively, and we will assume it to be at least one. We can find an explicit simple extension starting from a hyperelliptic connected component of Abelian strata, therefore showing that the desired Rauzy--Veech groups have finite index. Indeed, for an integer $d \geq 2$ consider the symmetric permutation on $d$ letters:
	{\small\[
		\tau_d = \begin{pmatrix}
			0 & 1 & 2 & \cdots & d-1 \\
			d-1 & d-2 & d-3 & \cdots & 1
		\end{pmatrix}.
	\]}
	Let $g_d = d/2$ if $d$ is even and $(d-1)/2$ if $d$ is odd. It is well-known that this permutation represents the component $\H(2g_d - 2)^\hyp$ if $d$ is even and $\H(g_d - 1, g_d - 1)^\hyp$ if $d$ is odd.
	
	Now, for $s, r \geq 1$ let
	{\small\[
		\sigma_{s, r} =
		\begin{pmatrix}
			0 & A & 1 & 2 & \cdots & s & A & s+1 & s+2 & \cdots & s+r \\
			s+r & \cdots & s+2 & s+1 & B & s & \cdots & 2 & 1 & B & 0
		\end{pmatrix}
	\]}
	which can be obtained from $\tau_{s+r+1}$ by applying two simple extensions. By putting $s = 2k$ and $r = 2j+1$, $\sigma_{s, r}$ represents the connected component $\Q(4j+2,2k-1,2k-1)^\hyp$, and, by putting $s = 2k$ and $r = 2j$, it represents $\Q(2j-1,2j-1,2k-1,2k-1)^\hyp$ \cite[Section 3.6]{Z:representatives}. In the former case, removing the letters $A$ and $B$ produces $\tau_{2j+2k+2}$, whose genus is $j+k+1$. In the latter case, it produces $\tau_{2j+2k+1}$, whose genus is $j+k$. Therefore, we obtain genus-preserving simple extensions starting at hyperelliptic components of Abelian strata and ending at $\Q(4j+2, 2k-1, 2k-1)^\hyp$ or $\Q(2j-1,2j-1,2k-1,2k-1)^\hyp$. We conclude that the indices of their Rauzy--Veech groups are finite in their ambient symplectic groups.
	
	\subsection{Non-hyperelliptic components} All strata of the form $\Q(4j+2, 2k-1, 2k-1)$ and $\Q(2j-1, 2j-1, 2k-1, 2k-1)$, for integers $j, k \geq 0$ and genus at least two, have exactly two connected components, except for $\Q(6,3,3)$ and $\Q(3,3,3,3)$ (which have three), and $\Q(2,1,1)$ and $\Q(1,1,1,1)$ (which are connected). We will consider strata of this form with exactly two connected components and prove that the Rauzy--Veech group of the non-hyperelliptic one has finite index.
	
	We will first prove it for strata of surfaces of genus at least three. Consider the following permutation representatives of minimal Abelian strata computed by Zorich:
	{\small\[
		\tau_g =
		\begin{pmatrix}
			0 & 1 & 2 & 3 & 5 & 6 & \cdots & 3g - 7 & 3g - 6 & 3g - 4 & 3g - 3 \\
			3 & 2 & 6 & 5 & 9 & 8 & \cdots & 3g - 3 & 3g - 4 & 1 & 0
		\end{pmatrix}
	\]}
	for $g \geq 3$ and
	{\small\[
		\sigma_g =
		\begin{pmatrix}
			0 & 1 & 2 & 3 & 5 & 6 & \cdots & 3g - 7 & 3g - 6 & 3g - 4 & 3g - 3 \\
			6 & 5 & 3 & 2 & 9 & 8 & \cdots & 3g - 3 & 3g - 4 & 1 & 0
		\end{pmatrix}
	\]}
	One has that $\tau_g$ represents the strata $\H(2g - 2)^\odd$ for every $g \geq 3$ and that $\sigma_g$ represents $\H(2g - 2)^\even$ for every $g \geq 4$ \cite[Proposition 3, Proposition 4]{Z:representatives}. Moreover, these representatives have a single cylinder. One can use \Cref{lem:extension} and \Cref{cor:extension_abelian} to produce generalized permutation representatives of $\Q(4j+2, 2k-1, 2k-1)$ and $\Q(2j-1, 2j-1, 2k-1, 2k-1)$, which will also have a single cylinder since, as is made explicit in the  proof of \Cref{lem:extension}, we can assume that the letters were not inserted at the beginning of the top row. We will show that such extensions are not hyperelliptic by using the following fact: a generalized permutation such that the first letter in its top row coincides with the last letter in its bottom row represents a hyperelliptic connected component if and only if the generalized permutation obtained by removing said letter has, up to cyclic permutations on both rows, one of the following two forms:
	{\small\[
		\begin{pmatrix}
			A & 1 & 2 & \cdots & s & A & s+1 & s+2 & \cdots & s+r \\
			s+r & \cdots & s+2 & s+1 & B & s & \cdots & 2 & 1 & B
		\end{pmatrix}
	\]}
	or
	{\small\[
		\begin{pmatrix}
			1 & 2 & \cdots & r+1 & 1 & 2 & \cdots & r+1 \\
			r+2 & r+3 & \cdots & r+s+2 & r+2 & r+3 & \cdots & r+s+2
		\end{pmatrix}.
	\]}
	This fact was proven by Lanneau \cite[Proposition 11]{Z:representatives}. It is clear that by inserting two or three letters to $\tau_g$ or $\sigma_g$ it will not have one of those forms, so the extensions lie in the non-hyperelliptic components.
	
	If the genus is three, we get sequences of simple extensions starting from $\H(4)^\odd$, whose Rauzy--Veech group has index $28$ inside its ambient symplectic group, and ending at any non-exceptional and non-hyperelliptic component. Moreover, we can also find simple extensions starting from $\H(4)^\hyp$ and ending at the four components $\Q(10, -1, -1)^\nonhyp$, $\Q(6, 1, 1)^\nonhyp$, $\Q(5, 5, -1, -1)^\nonhyp$, $\Q(3, 3, 1, 1)^\nonhyp$. Indeed, the generalized permutations
	{\small\[
	\begin{pmatrix}
		1 & A & A & 2 & 3 & 4 & 5 & 6 \\
		6 & B & B & 5 & 4 & 3 & 2 & 1
	\end{pmatrix}\quad \textrm{ and } \quad
	\begin{pmatrix}
		1 & A & 2 & 3 & A & 4 & 5 & 6 \\
		6 & B & 5 & 4 & B & 3 & 2 & 1
	\end{pmatrix}
	\]}
	represent the components $\Q(10, -1, -1)^\nonhyp$, $\Q(6, 1, 1)^\nonhyp$, respectively, and erasing the letters $A$ and $B$ produces permutations representing $\H(4)^\hyp$. The other two components can be obtained by using these two generalized permutations together with \Cref{lem:extension}. Since the index of $\H(4)^\hyp$ is $288$, we obtain that the Rauzy--Veech groups of these four components are equal to their entire ambient symplectic groups by maximality.
	
	The only remaining non-exceptional and non-hyperelliptic component in genus three is $\Q(2, 3, 3)^\nonhyp$. In this case, we were not able to find a simple extension starting at $\H(4)^\hyp$, so we only conclude that the Rauzy--Veech group has index at most $28$.
	
	If the genus is greater than three, we get sequences of simple extensions starting from both non-hyperelliptic components of minimal Abelian strata, so the Rauzy--Veech group is equal to its entire ambient symplectic group.
	
	For genus two, only the strata $\Q(6,-1,-1)$ and $\Q(3,3,-1,-1)$ are not connected. The generalized permutations below represent their non-hyperelliptic components. Moreover, erasing the letters $A$ and $B$ produces representatives of $\H(2)$ and $\H(1,1)$, respectively:
	{\small\[
	\begin{pmatrix}
		1 & 2 & 3 & A & A & 4 \\
		4 & 3 & B & B & 2 & 1
	\end{pmatrix}\quad \textrm{ and } \quad
	\begin{pmatrix}
		1 & 2 & A & A & 3 & 4 & 5 \\
		5 & B & B & 4 & 3 & 2 & 1
	\end{pmatrix}.
	\]}
	The index of the Rauzy--Veech group of $\H(2)$ is $6$, while the Rauzy--Veech group of $\H(1,1)$ is equal to its entire ambient symplectic group.
	
	\subsection{Exceptional strata} There exist four exceptional strata that satisfy our hypothesis, namely $\Q(6,3,-1)$ and $\Q(3,3,3,-1)$ in genus $3$ and $\Q(6,3,3)$ and $\Q(3,3,3,3)$ in genus $4$. These strata have two non-hyperelliptic connected components, usually called \emph{regular} and \emph{irregular} \cite{L:connected_components,CM:low_genus}. For these cases, we can find explicit simple extensions, shown in \Cref{table:extensions}, to prove that their Rauzy--Veech groups are equal to their entire ambient symplectic groups. They start from either a connected component of the moduli space of Abelian differentials whose Rauzy--Veech group is its entire symplectic subgroup, or from two different connected components of minimal Abelian strata.
	
	These simple extensions were found by using the \texttt{surface\_dynamics} package for SageMath to obtain explicit representatives for the exceptional connected components. We then performed a depth-first scan on their Rauzy classes.
	
	\begin{longtable}[ht!]{|c|c|c|}
			\hline
			Start & End & Generalized permutation
			\\
			\hline
			$\H(4)^\hyp$ & $\Q(6,3,-1)^\reg$ & {\small $\begin{pmatrix} 1 & 2 & 3 & A & 4 & A & 5 & 6 \\ 6 & 5 & 4 & 3 & 2 & B & B & 1 \end{pmatrix}$}
			\\
			\hline
			$\H(4)^\odd$ & $\Q(6,3,-1)^\reg$ & {\small $\begin{pmatrix} 1 & 2 & 3 & 4 & A & 5 & A & 6 \\ 6 & 4 & B & B & 2 & 5 & 3 & 1 \end{pmatrix}$}

			\\
			\hline
			$\H(4)^\hyp$ & $\Q(6,3,-1)^\irr$ & {\small $\begin{pmatrix} 1 & A & 2 & 3 & 4 & 5 & A & 6 \\ 6 & B & B & 5 & 4 & 3 & 2 & 1 \end{pmatrix}$}
			\\
			\hline
			$\H(4)^\odd$ & $\Q(6,3,-1)^\irr$ & {\small $\begin{pmatrix} 1 & 2 & 3 & 4 & 5 & A & A & 6 \\ 6 & B & 3 & B & 5 & 2 & 4 & 1 \end{pmatrix}$}
			
			\\
			\hline
			$\H(3,1)$ & $\Q(3,3,3,-1)^\reg$ & {\small $\begin{pmatrix} 1 & A & A & 2 & 3 & 4 & 5 & 6 & 7 \\ 7 & 6 & B & 5 & 2 & B & 4 & 3 & 1 \end{pmatrix}$}
			
			\\
			\hline
			$\H(3,1)$ & $\Q(3,3,3,-1)^\irr$ & {\small $\begin{pmatrix} 1 & 2 & 3 & 4 & 5 & A & 6 & A & 7 \\ 7 & 6 & 2 & B & B & 5 & 4 & 3 & 1 \end{pmatrix}$}
			
			\\
			\hline
			$\H(6)^\even$ & $\Q(6,3,3)^\reg$ & {\small $\begin{pmatrix} 1 & 2 & A & 3 & 4 & 5 & 6 & 7 & A & 8 \\ 8 & 7 & 5 & B & 2 & 6 & B & 4 & 3 & 1 \end{pmatrix}$}
			\\
			\hline
			$\H(6)^\odd$ & $\Q(6,3,3)^\reg$ & {\small $\begin{pmatrix} 1 & A & 2 & 3 & 4 & 5 & A & 6 & 7 & 8 \\ 8 & 4 & 7 & B & 5 & 3 & 6 & B & 2 & 1 \end{pmatrix}$}
			
			\\
			\hline
			$\H(6)^\even$ & $\Q(6,3,3)^\irr$ & {\small $\begin{pmatrix} 1 & 2 & 3 & 4 & A & 5 & A & 6 & 7 & 8 \\ 8 & 7 & 5 & B & 2 & 6 & B & 4 & 3 & 1 \end{pmatrix}$}
			\\
			\hline
			$\H(6)^\odd$ & $\Q(6,3,3)^\irr$ & {\small $\begin{pmatrix} 1 & 2 & 3 & 4 & 5 & 6 & A & 7 & A & 8 \\ 8 & B & 5 & B & 3 & 7 & 4 & 6 & 2 & 1 \end{pmatrix}$}
			
			\\
			\hline
			$\H(3,3)^\nonhyp$ & $\Q(3,3,3,3)^\reg$ & {\small $\begin{pmatrix} 1 & A & 2 & A & 3 & 4 & 5 & 6 & 7 & 8 & 9 \\ 9 & 6 & B & 5 & 3 & 7 & 2 & 8 & B & 4 & 1 \end{pmatrix}$}
			
			\\
			\hline
			$\H(3,3)^\nonhyp$ & $\Q(3,3,3,3)^\irr$ & {\small $\begin{pmatrix} 1 & A & 2 & A & 3 & 4 & 5 & 6 & 7 & 8 & 9 \\ 9 & 5 & 2 & 6 & 4 & 3 & B & 8 & B & 7 & 1 \end{pmatrix}$} \\
			\hline
		\caption{Table of extensions for non-hyperelliptic connected components of exceptional strata. The generalized permutation in the third column belongs to the connected component in the second column. Erasing letters $A$ and $B$ produces a permutation belonging to the Abelian connected component in the first column.}
		\label{table:extensions}
	\end{longtable}
	
	\section{Simplicity of the Lyapunov spectra} \label{sec:simplicity}
	
	In this section, we will show that if a fixed stratum $\S$ satisfies the hypothesis of \Cref{thm:main}, then its (``plus'' or ``minus'', depending on the stratum) Lyapunov spectrum is simple. Fix a generalized permutation $\pi$ representing $\S$ and constructed by a sequence of simple extensions starting from a minimal Abelian stratum, as in the proof of \Cref{thm:main}.
	
	We have defined the Rauzy--Veech group of a stratum by allowing the arrows of a Rauzy class to be reversed. The next lemma shows that this coincides with the group generated by \emph{directed} cycles and their inverses.
	\begin{lem}
		The group generated by the (``plus'' or ``minus'') Rauzy--Veech monoid of $\S$ is equal to the (``plus'' or ``minus'', respectively) Rauzy--Veech group of $\S$.
	\end{lem}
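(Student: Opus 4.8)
The plan is to reduce the statement to a purely combinatorial fact about directed paths in the Rauzy class and then invoke strong connectivity. One inclusion is immediate: a directed cycle based at $\pi$ is in particular a cycle in $\tilde{\RR}$ based at $\pi$ (and, in the ``minus'' case, an admissible one, since it already has no duplicate winner), so every generator of the Rauzy--Veech monoid lies in the Rauzy--Veech group, and the latter is a group; hence the group generated by the monoid is contained in the Rauzy--Veech group. It therefore remains to show that $B_\gamma$ (resp.\ $\tilde{B}_\gamma$), for an arbitrary cycle $\gamma$ in $\tilde{\RR}$ based at $\pi$ (resp.\ an admissible one), is a product of matrices $B_c^{\pm 1}$ (resp.\ $\tilde{B}_c^{\pm 1}$) with $c$ a directed cycle based at $\pi$ (resp.\ admissible).

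First I would fix, using that the directed graph underlying the Rauzy class $\RR$ is strongly connected, a directed path $a_v \colon \pi \to v$ and a directed path $b_v \colon v \to \pi$ for each vertex $v$, taking $a_\pi$ to be the empty path. Writing $\gamma = \gamma_1 \gamma_2 \dotsb \gamma_n$ as a concatenation of single arrows and reversed arrows of $\tilde{\RR}$, and letting $\pi = \pi_0, \pi_1, \dotsc, \pi_n = \pi$ be the successive vertices visited, the paths $a_{\pi_i}$ telescope, so $\gamma$ equals the concatenation of the cycles $d_i = a_{\pi_{i-1}} \, \gamma_i \, a_{\pi_i}^{-1}$ based at $\pi$. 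Consequently $B_\gamma$ is a product of the $B_{d_i}^{\pm 1}$, and it suffices to treat a single $d_i$. If $\gamma_i$ is a genuine arrow $u \to v$ of $\RR$, then $d_i = (a_u \, \gamma_i \, b_v)\,(a_v \, b_v)^{-1}$, where both $a_u \, \gamma_i \, b_v$ and $a_v \, b_v$ are \emph{directed} cycles based at $\pi$; hence $B_{d_i}$ is a product of $B$'s of directed cycles and their inverses. If instead $\gamma_i$ is a reversed arrow, I apply the previous case to $d_i^{-1}$, whose middle arrow is genuine, and invert. This settles the ``plus'' case.

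For the ``minus'' case the same scheme applies word for word, with $\tilde{B}$ in place of $B$, \emph{provided} the auxiliary directed paths $a_v, b_v$ — and hence the directed cycles $a_u \, \gamma_i \, b_v$ and $a_v \, b_v$ — can be chosen admissible, i.e.\ with no duplicate letter ever winning. Equivalently, one needs: any two vertices of $\tilde{\RR}$ joined by an admissible path are joined by an admissible \emph{directed} path in $\RR$. This is the one point that is not formal, and it is where I expect the main difficulty to lie: an admissible arrow is exactly a type-preserving Rauzy move, so the admissible subgraph does not connect vertices of different types, and one must check that it is strongly connected on the relevant type component. I would deduce this from the construction of $\pi$: since $\pi$ is built from a permutation by simple extensions, the part of the Rauzy class carrying the ``minus'' theory is precisely the one swept out by the $\mathcal{E}_*$-images of the arrows of the (strongly connected, directed) Abelian Rauzy class, and the two- or three-arrow detours built into the definition of $\mathcal{E}_*$ supply admissible directed paths between any two of its vertices. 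Granting this, the decomposition above carries over and gives the reverse inclusion, completing the proof.
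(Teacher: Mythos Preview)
Your ``plus'' argument is correct and follows the same strategy as the paper: use strong connectivity of the directed Rauzy class to insert auxiliary directed paths, turning the matrix of any undirected cycle into a word in matrices of directed cycles and their inverses. The paper groups the cycle into maximal runs of constant orientation and inserts one auxiliary path $w_j$ at each sign change, whereas you decompose arrow by arrow via fixed paths $a_v, b_v$ and write each elementary loop as $(a_u\,\gamma_i\,b_v)(a_v\,b_v)^{-1}$. These are interchangeable bookkeeping devices.

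For the ``minus'' case you are in fact more careful than the paper, which simply declares the proof ``identical''. You correctly isolate the non-formal point: the auxiliary directed paths must themselves be admissible (no duplicate winner), which amounts to strong connectivity of the admissible---equivalently type-preserving---subgraph on the component of $\pi$. However, your proposed justification via $\mathcal{E}_*$ does not close this gap. The $\mathcal{E}_*$-image of the Abelian Rauzy class is indeed a strongly connected directed admissible subgraph through $\pi$, but an arbitrary admissible cycle in $\tilde{\RR}$ based at $\pi$ need not stay inside it: an admissible arrow whose loser is one of the inserted letters erases to the trivial move on the Abelian side and is therefore not the $\mathcal{E}_*$-lift of any Abelian arrow, so its endpoint may lie outside the $\mathcal{E}_*$-image. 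Hence your paths $a_v, b_v$ are not guaranteed to exist for every vertex the cycle visits. This does not affect the downstream simplicity argument---the directed admissible cycles coming from $\mathcal{E}_*$ already generate a finite-index, hence Zariski-dense, subgroup of the ambient symplectic group, which is all that is used afterwards---but as a proof of the lemma in the ``minus'' case your sketch and the paper's one-line claim leave the same point unaddressed.
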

	\begin{proof}
		The proof is purely combinatorial and it is therefore identical for the ``plus'' and ``minus'' cases. It is essentially a consequence of the fact that Rauzy classes are strongly connected.
		
		Let $\RR$ be the Rauzy class of $\pi$ and let $\tilde{\RR}$ be an undirected copy. Let $\gamma = \gamma_1^{\varepsilon_1}\gamma_2^{\varepsilon_2} \dotsb \gamma_n^{\varepsilon_n}$ be a cycle in $\tilde{\RR}$, where each $\gamma_i$ is an arrow in $\RR$ and the $\varepsilon_i \in \{-1,+1\}$ are used to denote either an arrow of $\RR$ or its reversed copy. Let $0 = i_0 < i_1 < i_2 < \dotsb < i_k = n$ be indices such that $\varepsilon_{i_j + 1} = \varepsilon_{i_j + 2} = \dotsb = \varepsilon_{i_{j+1}}$ and $\varepsilon_{i_j} \neq \varepsilon_{i_{j+1}}$ for each $0 \leq j < k$. Assume that $\varepsilon_1 = +1$, since the other case is similar.
		
		We define the following oriented cycles:
		\[
			c_1 = \gamma_1 \gamma_2 \dotsb \gamma_{i_1} w_1
		\]
		where $w_1$ is any path in $\RR$ joining the end of $\gamma_{i_1}$ with $\pi$;
		\[
			c_2 = w_2 \gamma_{i_2} \gamma_{i_2 - 1} \dotsb \gamma_{i_1 + 1} w_1
		\]
		where $w_2$ is any path in $\RR$ joining $\pi$ with the start of $\gamma_{i_2}$;
		\[
			c_3 = w_2 \gamma_{i_2 + 1} \gamma_{i_2 + 2} \dotsb \gamma_{i_3} w_3
		\]
		where $w_3$ is any path in $\RR$ joining the end of $\gamma_{i_3}$ with $\pi$; and continue in this way inductively. We choose the last $w_j$ as the zero-length path joining $\pi$ and $\pi$. Then, the matrix $B_\gamma$ is equal to ${}\dotsb B_{c_3} B_{c_2}^{-1} B_{c_1}$ as the matrices $B_{w_j}$ cancel out. We conclude that $B_\gamma$ belongs to the group generated by the Rauzy--Veech monoid of $\pi$, so it contains the Rauzy--Veech group of $\pi$.
	\end{proof}
	
	We obtain that the Rauzy--Veech monoid of $\S$ is Zariski-dense. By the work of Benoist \cite{B:density_pinching}, it is also pinching and twisting.
	
	What remains to obtain the simplicity of the Lyapunov spectrum follows from standard arguments using the general criterion by Avila and Viana \cite{AV:KZ_conjecture}. We will therefore sketch the steps without too many details. First, it is important to consider that not every element of the Rauzy--Veech monoid represents an orbit of the Teichmüller flow. A standard way to ensure that a Teichmüller orbit follows a given Rauzy--Veech orbit is using some ``completeness'' condition. More precisely, we say that a walk $\gamma$ in the Rauzy class of $\pi$ is \emph{$k$-complete} if every letter of $\A$ wins at least $k$ times in $\gamma$. It is clear that $k$-complete walks can be found, as Rauzy classes are strongly connected.  Now, let $\gamma^*$ be a $k$-complete cycle $\gamma^*$ at $\pi$ such that if $\gamma^* = \gamma_{\mathrm{s}}\gamma = \gamma\gamma_{\mathrm{e}}$ then $\gamma$ is either $\gamma^*$ or trivial. By the work of Avila and Resende \cite[Section 6.2]{AR:exponential}, if $k$ is sufficiently large then any cycle $\gamma$ satisfying the following three conditions produces an orbit of the Teichmüller flow:
	\begin{itemize}
		\item $\gamma$ starts with $\gamma^*$;
		\item $\gamma$ ends with $\gamma^*$;
		\item $\gamma$ does not start with $\gamma^*\gamma^*$. 
	\end{itemize}
	We write $\gamma = \gamma^* w \gamma^*$. To conclude, we observe that monoid induced by these cycles is also Zariski-dense, as the entries of $B_{\gamma}$ depend polynomially on the entries of $B_{w}$. This shows that this monoid is pinching and twisting, and, therefore, that the Lyapunov spectrum of the Teichmüller flow on $\S$ is simple, so it concludes the proof.
	
	\medbreak 
	\textbf{Acknowledgements:} I am grateful to Erwan Lanneau for his useful remarks and to Quentin Gendron for observing that some exceptional strata were missing from the proof. I'm also grateful to my advisors, Anton Zorich and Carlos Matheus.
	
\sloppy\printbibliography
	
\end{document}